\newfont{\german}       {eufm10 at 12pt}
\numberwithin{equation}{section}
\renewenvironment{proof}[1][]{\begin{trivlist} \item[\hskip\labelsep 
\textit{Proof#1.}]}{\hfill\qedsymbol \end{trivlist}}
\newcommand{\noqed}{\renewcommand{\qedsymbol}{}}
\newcommand{\auxqed}[1][1.5]{\noqed\vskip -#1\baselineskip\hfill$\square$}
\newcommand{\disqed}[1][1]{%
\vskip -#1\baselineskip\hfill$\square$\vskip #1\baselineskip}
\newcommand{\comment}[1]{}
\newcommand{\ra}{\rightarrow}                   
\newcommand{\lra}{\longrightarrow}              
\DeclareMathOperator{\Hom}{Hom} 
\newcommand {\CC}{{\mathbb C}}
\newcommand {\F}{{\mathbb F}}
\newcommand {\Z}{{\mathbb Z}}
\newcommand {\mk}{{\mathbb F}}
\newcommand{\can}{\tau_{T_2}}
\renewcommand{\can}{can}
\newcommand{\cat}{\mathcal}    
\newcommand{\id}{{\mathrm{id}}}
\newcommand{\tmf}{\text{\it tmf\/}}
\newtheorem{thm}{Theorem}[section]
\newtheorem{prop}[thm]{Proposition}
\newtheorem{lemma}[thm]{Lemma}
\theoremstyle{definition}  
\newtheorem{definition}[thm]{Definition}
\newtheorem{convention}[thm]{Convention}
\subjclass{}
\begin{document}

\title{Towards a splitting of the $K(2)$-local string bordism spectrum}
\author{Gerd Laures and Bj\"orn Schuster}

\address{ Fakult\"at f\"ur Mathematik,  Ruhr-Universit\"at Bochum, NA1/66, D-44780 Bochum, Germany}


\subjclass[2000]{Primary 55N34; Secondary 55P20, 22E66}

\date{\today}

\begin{abstract}
We show that $K(2)$-locally, the smash product of the string bordism spectrum and the spectrum 
$T_2$ splits into copies of Morava $E$-theories. Here, $T_2$ is related to the Thom spectrum 
of the canonical bundle over $\Omega SU(4)$.
\end{abstract}

\maketitle
 
\section{Introduction and statement of results}
In the late 80s \cite{MR970288}, Witten gave an interpretation of the level one elliptic genus as 
the $S^1$-equivariant index of the Dirac operator on the free loop space $LM$ of a manifold. 
He needed the loop space of the manifold to carry a spin structure. This is guaranteed if the 
classifying map of its stable tangent bundle lifts to $BString$, the $7$-connected cover of $BO$. 
This classifying space and its associated Thom spectrum $MString$ have been extensively 
studied by homotopy theorists already in the 70s. 
At that time, Anderson, Brown and Peterson had just succeeded in calculating the spin bordism groups via a now famous splitting of the Thom spectrum $MSpin$ \cite{MR0219077}.
The string bordism groups however 
have not been calculated yet. It is known, that they have torsion at the primes 2 and 3, 
and the hope is to get new insights using  the Witten genus and the arithmetic of modular forms.

At the prime~3,  Hovey and Ravenel \cite[Corollary 2.2]{MR1297530}  have shown that the 
product $DA(0)\wedge MString$ is a wedge of suspensions of the  Brown-Peterson 
spectrum $BP$. Here, $DA(0)$ is the 8-skeleton of $BP$; it is a 3-cell complex which is free 
over $P(0)$, the sub-Hopf algebra of the the mod 3 Steenrod algebra ${\cat A}$
generated by~$P^1$.
 
At the prime~2, they have shown that $DA(1)\wedge MU\langle 6\rangle$ splits into a wedge 
of suspensions of $BP$s, where $MU\langle 6\rangle$ is the Thom spectrum of 
the 5-connected cover $BU\langle 6\rangle$ of $BU$ and $DA(1)$ is an eight cell 
complex whose cohomology is free over $P(1)$, the double of $A(1)$.

This paper is concerned with a similar decomposition of $MString$ at the prime~2. 
Starting point is the spectrum of topological modular forms $\tmf$ which was introduced 
by Goerss, Hopkins and Miller, see \cite{MR2648680} or \cite{MR3223024}. 
Its mod 2 cohomology is the quotient ${\cat A}/ \!/ {\cat A}(2)$ which is known to be a 
direct summand in the cohomology of $MString$ (cf. \cite{MR550517}).  
There even is a ring map from $MString$ to $\tmf$ which induces the Witten genus 
mentioned earlier (cf. \cite{AHR10}). Moreover,  the 2-local equivalence 
$$DA(1)\wedge \tmf  \simeq BP \langle 2\rangle$$ 
(cf.~\cite{MR3515195})  looks encouraging when looking for new splittings 
of products of known spectra with $MString$ into complex orientable spectra at the prime 2.  

When dealing with objects from derived algebraic geometry, it is convenient to work with 
ring spectra rather than spectra. Ravenel introduced an important filtration 
$\{ X(n)\}_{n\in {\mathbb N}}$ of $MU$ by ring spectra which arise from the thomification 
of the filtration 
$$\Omega SU(1)\subset \Omega SU(2) \cdots \subset \Omega SU =BU\,.$$
A multiplicative map from $X(n)$ to a complex orientable spectrum $E$ corresponds to 
a complex orientation of $E$ up to degree $n$ . 
Locally at a prime $p$, $X(n)$ is equivalent to a wedge of suspensions of spectra $T(m)$ 
with $p^m\leq n <p^{m+1}$. We refer to \cite[Section 6.5] {MR860042} for a full report on these spectra. They can be regarded as  $p$-typical versions of the spectra $X(n)$:  they filter the Brown-Peterson spectrum $BP$ in the same way as the spectra $X(n)$ filter the complex bordism spectrum $MU$.  In addition, they satisfy
$$ \pi_* E_*\wedge T(m)\cong E_*[t_1,t_2,\ldots ,t_m]$$
for all complex orientable $E$. A
multiplicative map from $T(m)$ to $E$ corresponds to a $p$-typical orientation up to degree $m$. 
The homotopy groups of $T(m)$ coincide with the homotopy groups of $BP$ up to dimension $2(p^m-2)$.  

For $p=2$, the homotopy groups of $T(2)$ coincide with those of $BP$ up to 
dimension 12. There is a  map from the twelve dimensional even complex $DA(1)$ to $BP$ 
which induces an isomorphism in $\pi_0$ (see for instance \cite{MR3515195}). It is unique in 
mod 2 homology and  lifts to a map
$$ DA(1)\lra T(2)$$
by what was just said. Hence, when looking for splitting results we may replace the finite spectrum $DA(1)$ with the reasonably small spectra $T(2)$ or $X(4)$. Along these lines, we 
mention that
$$ \pi_* X(4)\wedge \tmf\cong \Z[a_1,a_2,a_3,a_4,a_6]$$
carries the  Weierstrass Hopf algebroid  (cf. \cite[Chapter 9]{MR3223024}).

In order to state the main theorem, we work in the $K(2)$-local category and omit the localization 
functor from the notation. In this category  $BP$ splits into a sum of Johnson-Wilson spectra $E(2)$ (see \cite{MR1722151}). We shall write $T_2$ for the even periodic version of $T(2)$ 
by which we mean the following: the class $v_2$ of degree 6 is a unit in the local $T(2)$ 
to which we may associate a root by setting
\begin{eqnarray}\label{T2}
 T_2&=&T(2)[u^\pm]/(u^3-v_2).
 \end{eqnarray}
Note that $T(2)$ is related to $E(2)$ in the same way as $T_2$ to the Morava $E$-theory 
spectrum $E_2$ so that the notation fits well.


\begin{thm}\label{main}
$K(2)$-locally at the prime 2, there is a splitting of 
$ T_2 \wedge MString$ into a wedge of copies of $E_2$.
\end{thm}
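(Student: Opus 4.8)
The plan is to reduce the theorem to a statement about Morava modules and then to use the string orientation together with the Weierstrass parametrisation of $\tmf$. Recall the recognition principle: a $K(2)$-local spectrum $Z$ is a ($K(2)$-locally interpreted) wedge of suspensions of $E_2$ exactly when its Morava module $(E_2)^\vee_*Z=\pi_*L_{K(2)}(E_2\wedge Z)$ is pro-free over $(E_2)_*$ and \emph{induced} over the (extended) Morava stabilizer group $\mathbb G_2$, i.e.\ a sum of copies of $\operatorname{Map}^{\mathrm{cts}}(\mathbb G_2,(E_2)_*)$. Since $K(2)_*$ is a graded field, pro-freeness follows once $K(2)_*(T_2\wedge MString)$ is known to be even: the Künneth isomorphism gives $K(2)_*(T_2\wedge MString)\cong K(2)_*T_2\otimes_{K(2)_*}K(2)_*MString$, one has $K(2)_*T_2=K(2)_*[t_1,t_2,u^{\pm}]/(u^3-v_2)$, and composing the Ando--Hopkins--Rezk orientation with $L_{K(2)}\tmf\to E_2$ makes $E_2$ (hence $K(2)$) into an oriented $MString$-algebra, so $K(2)_*MString\cong K(2)_*BString$ by the Thom isomorphism. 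Thus the entire problem is the identification of the $\mathbb G_2$-action.

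The first step is to pass from $MString$ to $\tmf$. The orientation $\sigma\colon MString\to\tmf$ realises $H^*\tmf=\mathcal A/\!/\mathcal A(2)$ as a direct summand of $H^*MString$, and by the known structure of the latter the complementary summands are, as $\mathcal A$-modules, built from $\mathcal A/\!/\mathcal A(1)$, $\mathcal A/\!/\mathcal A(0)$ and free modules (cf.\ \cite{MR550517} and subsequent work). Hence $H^*(\operatorname{cofib}\sigma)$ has vanishing $Q_2$-Margolis homology, and since everything in sight is bounded below of finite type the same holds after smashing with $DA(1)$; as $DA(1)\wedge\tmf\simeq BP\langle2\rangle$ \cite{MR3515195} and $DA(1)\to T(2)\to T_2$, a nilpotence argument upgrades this to $L_{K(2)}(T_2\wedge\operatorname{cofib}\sigma)\simeq *$. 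Therefore $L_{K(2)}(T_2\wedge MString)\simeq L_{K(2)}(T_2\wedge\tmf)$, and it suffices to split $T_2\wedge\tmf$.

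Now $K(2)$-locally $\tmf$ is the spectrum of sections of the structure sheaf over the supersingular locus of $\overline{\mathcal M}_{ell}$ at $2$, a copy of $BG$ over $W(\mathbb F_4)$ with $G$ the automorphism group of the supersingular curve (order $24$, resp.\ $48$ with the Galois symmetry); so $L_{K(2)}\tmf\simeq E_2^{hG}$, which is manifestly \emph{not} a wedge of $E_2$'s. Smashing with $T_2$ — the even-periodic form of $T(2)$, a summand of the $\Omega SU(4)$-Thom spectrum $X(4)$ whose smash with $\tmf$ carries the Weierstrass Hopf algebroid $\mathbb Z[a_1,\dots,a_6]$ (\cite[Ch.\ 9]{MR3223024}) — corresponds to adjoining to the elliptic datum a $2$-typical coordinate $t_1,t_2$ up to order $2$ and a cube root $u$ of $v_2$, i.e.\ to passing to a torsor over the supersingular locus under the group of such coordinate changes. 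The crucial point is that $G$ acts on this coordinate data without fixed points: an automorphism nontrivial on the $\mathbb G_m$-weight scales $u$ by a nontrivial root of unity, and one trivial on weights but nontrivial on the formal coordinate up to order $2$ acts by a nontrivial translation on $t_1,t_2$, and no nontrivial automorphism fixes all of $u,t_1,t_2$. Consequently the $K(2)$-local Weierstrass moduli becomes an honest (affine, pro-free) scheme carrying a height-$2$ formal group which pulls back the universal deformation, so that $(E_2)^\vee_*(T_2\wedge\tmf)$ is the algebra of functions on a $\mathbb G_2$-torsor over a pro-free base — an induced Morava module, via the substitution $f\mapsto(g\mapsto g^{-1}\!\cdot f(g))$. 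The recognition principle then gives $L_{K(2)}(T_2\wedge\tmf)\simeq\bigvee E_2$, and with the previous step, Theorem~\ref{main}.

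I expect the hard part to be making the third step precise: one must compute $(E_2)^\vee_*(T_2\wedge\tmf)$ — or, via the Thom isomorphism, $(E_2)^\vee_*(T_2\wedge MString)$ in terms of $(E_2)_*BString$ — and verify that the $\mathbb G_2$-action is exactly the claimed torsor, which reduces to a finite check that every nontrivial element of $G$ (including the Galois symmetry) acts freely on $\operatorname{Spec}(E_2)_*[t_1,t_2,u^{\pm}]/(u^3-v_2)$, together with control of the Thom twist coming from the $E_2$-valued string orientation. A second, more routine but still delicate, obstacle is the upgrade in the second step from the cohomological splitting of $H^*MString$ to the spectrum-level $K(2)$-acyclicity of $T_2\wedge\operatorname{cofib}\sigma$.
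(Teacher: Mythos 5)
The fatal step is your reduction from $MString$ to $\tmf$. By the K\"unneth isomorphism over the graded field $K(2)_*$ one has $K(2)_*(T_2\wedge\operatorname{cofib}\sigma)\cong K(2)_*T_2\otimes_{K(2)_*}K(2)_*(\operatorname{cofib}\sigma)$ with $K(2)_*T_2\neq 0$, so $L_{K(2)}(T_2\wedge\operatorname{cofib}\sigma)\simeq *$ holds if and only if $\sigma\colon MString\to\tmf$ is already a $K(2)_*$-equivalence; smashing with $T_2$ (and any nilpotence upgrade) buys nothing here. That is a much stronger statement than the theorem you are proving, and it is neither known nor expected: the exact height-one analogue --- that the Atiyah--Bott--Shapiro orientation $MSpin\to KO$ is a $K(1)$-equivalence --- is false, since the Anderson--Brown--Peterson splitting provides infinitely many $K(1)$-locally nontrivial $KO$-summands besides the one hit by the orientation, and the content of the present paper is precisely that $T_2\wedge MString$ is a large wedge governed by $K_0BString$, not by $K_0\tmf$. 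Moreover the input you invoke --- that the complement of $\mathcal{A}/\!/\mathcal{A}(2)$ in $H^*MString$ is assembled from $\mathcal{A}/\!/\mathcal{A}(1)$, $\mathcal{A}/\!/\mathcal{A}(0)$ and free modules, hence is $Q_2$-Margolis acyclic --- is not available: \cite{MR550517} gives only the single summand $\mathcal{A}/\!/\mathcal{A}(2)$, and an Anderson--Brown--Peterson-type decomposition of $H^*MString$ is a well-known open problem (its absence is part of the motivation for this paper). Your third step (the torsor/free-action analysis for $T_2\wedge\tmf$) is in any case only a sketch whose key verification you defer, and even if completed it would split $T_2\wedge\tmf$, not $T_2\wedge MString$.

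Your first step is essentially the paper's Propositions \ref{cofree} and \ref{last}: evenness of $K_*(T_2\wedge MString)$ via Kitchloo--Laures--Wilson, plus the criterion that a pro-free, cofree (induced) $K_0$ as a comodule over $\Sigma=K_0E$ forces a wedge of copies of $E_2$; that part of your outline is sound. But the paper never passes through $\tmf$. Instead it compares the Witten orientation with the complex orientation of $MU\langle 6\rangle$: the Ando--Hopkins--Strickland cubical-structure theorem produces a $\Sigma$-comodule algebra automorphism of $K_0(T_2\wedge MU\langle 6\rangle)$ interchanging the two orientations (Lemma \ref{key}), the Hovey--Ravenel splitting of $DA(1)\wedge MU\langle 6\rangle$ gives $\star$-surjectivity of ${\tau_U}_*$ and hence of ${\tau_W}_*$, and a new Milnor--Moore-type theorem for comodules over the pointed coalgebra $\Sigma$ then yields cofreeness of $K_0(T_2\wedge MString)$. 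To repair your argument you would need to replace the reduction to $\tmf$ by an argument of this kind that handles all of $K_0BString$ at once.
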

The proof of the theorem is in spirit Thom's and Wall's original proof for the splitting of $MO$ and $MSO$ (or even the refinements by Anderson-Brown-Peterson for $MSpin$).
It
uses a generalized Milnor-Moore argument which 
we believe to be of independent interest. One version of the original Milnor-Moore theorem 
states that a  graded connected Hopf algebra is free as a module over any of its 
sub-Hopf algebras.  There are dual  versions for surjective coalgebra maps and 
Hopf algebroids \cite[A1.1.17]{MR860042},
but they all use  connectivity and the grading. We will show a version which only uses the 
coradical filtration of pointed coalgebras. The  assumptions needed to make it work
are automatically satisfied  if the comodules are graded and connected. This generalization applies in particular to
the case of the Hopf algebra $\Sigma=\pi_0(K(n)\wedge E_n)$. We mention that that for $n=1$ it can be used to give a new elementary proof of the Anderson-Brown-Peterson splitting of $MSpin$ into sums of $KO$s in the $K(1)$-local setting.\par
In the $K(2)$-local setting, we prove a key lemma which allows us to switch between the complex and the Witten orientation for $MU\langle 6 \rangle$ by a comodule algebra automorphism. It relies on results of Ando-Hopkins-Strickland  \cite{MR1869850} and  supplies the requirements of the generalized Milnor-Moore theorem. Finally, we show that in the $K(2)$-local categeory a spectrum already splits into sums of copies of $E_2$ if its $K(2)$-homology splits as a comodule over $\Sigma$.

\subsubsection*{Acknowledgements.}
We would like to thank the referee for a careful revision.

\section{A Milnor-Moore type theorem}
Let $C$ be a coalgebra over a field $\mk$. Assume that $C$ is pointed, that is, all simple 
subcoalgebras are 1-dimensional. (Recall that a coalgebra is simple if  any proper subcoalgebra is trivial.) The coradical $R$ of a pointed coalgebra is generated by the set $G$ of grouplike 
elements (cf. \cite[p.182]{MR0252485}) and
$$R=\mk[G]$$ 
is a subcoalgebra of $C$.
The iterated coproduct defines an increasing filtration
$$F_k =\ker(C \lra C^{\otimes (k+1)}\lra (C/R)^{\otimes (k+1)})$$
which is  called  the {\em coradical  filtration}. 
By definition,  $F_0=R$. 
We  call a filtration {\em exhaustive} if every element of $C$ lies in some $F_k$. 
The following result is \cite[Theorem 9.1.6]{MR0252485}.
\begin{lemma}\label{Lemma PF} The coradical filtration satisfies 
$$\Delta (F_n) \subset  \sum_{i=0}^n  F_{i}\otimes F_{n-i}\,.$$
\disqed
\end{lemma}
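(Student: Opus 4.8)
The plan is to deduce the inclusion from elementary properties of the \emph{wedge product} of subspaces of a coalgebra. For subspaces $U,W\subseteq C$ set $U\wedge W=\Delta^{-1}(U\otimes C+C\otimes W)$; coassociativity makes $\wedge$ associative, and unwinding the definition of $F_k$ presents $F_n$ as the $(n+1)$-fold wedge $R\wedge\cdots\wedge R$. I would record two consequences of this description. First, $R=F_0$ is a subcoalgebra and the wedge of two subcoalgebras is again a subcoalgebra (\cite[Ch.~9]{MR0252485}), so by induction every $F_n$ is a subcoalgebra; in particular $\Delta(F_n)\subseteq F_n\otimes F_n$, and of course $F_0\subseteq F_1\subseteq\cdots$. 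Second, associativity of $\wedge$ regroups $R^{\wedge(n+1)}$ as $F_j\wedge F_{n-1-j}$ for each $0\le j\le n-1$; since the iterated coproduct satisfies $\Delta^{(n)}=(\Delta^{(j)}\otimes\Delta^{(n-1-j)})\circ\Delta$ (with $\Delta^{(0)}=\id$), this regrouping says precisely that $\Delta(F_n)\subseteq F_j\otimes C+C\otimes F_{n-1-j}$ for all such $j$, the only input being the formula $\ker(f\otimes g)=\ker f\otimes B+A\otimes\ker g$ for linear maps $f\colon A\to A'$, $g\colon B\to B'$ over a field.

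Next I would intersect these. Combining $\Delta(F_n)\subseteq F_n\otimes F_n$ with the inclusions just obtained gives
\[
\Delta(F_n)\subseteq\bigcap_{j=0}^{n-1}\Bigl((F_j\otimes C+C\otimes F_{n-1-j})\cap(F_n\otimes F_n)\Bigr)=\bigcap_{j=0}^{n-1}\bigl(F_j\otimes F_n+F_n\otimes F_{n-1-j}\bigr),
\]
where the equality is pure linear algebra: extending a basis of $F_n$ to a basis of $C$ makes each subspace a span of basis monomials, and the intersection is then read off on index sets, using only $F_j,F_{n-1-j}\subseteq F_n$. To evaluate the remaining intersection, choose a basis $\{v_\alpha\}$ of $F_n$ together with a ``degree'' $\deg\alpha\in\{0,\dots,n\}$ for which $F_k$ is spanned by $\{v_\alpha:\deg\alpha\le k\}$, $0\le k\le n$; this is possible because the $F_k$ form a flag. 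Then $F_j\otimes F_n+F_n\otimes F_{n-1-j}$ is spanned by those $v_\alpha\otimes v_\beta$ with $\deg\alpha\le j$ or $\deg\beta\le n-1-j$, while $\sum_{i=0}^{n}F_i\otimes F_{n-i}$ is spanned by those with $\deg\alpha+\deg\beta\le n$. A short check shows that a pair satisfies the former condition for every $j=0,\dots,n-1$ if and only if it satisfies the latter (for the nonobvious direction, if $\deg\alpha+\deg\beta\ge n+1$ then $j=\deg\alpha-1$ lies in $\{0,\dots,n-1\}$ and violates the condition). Hence the intersection is $\sum_{i=0}^{n}F_i\otimes F_{n-i}$, which is the claimed inclusion.

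The only genuinely coalgebra-theoretic ingredient, and the step I expect to be the main obstacle, is that the wedge of two subcoalgebras is again a subcoalgebra --- this is what makes $\Delta(F_n)\subseteq F_n\otimes F_n$ available, and everything after it is bookkeeping with kernels of tensor products together with the combinatorial identity above. For subcoalgebras $D,E$ one proves it by taking $x\in D\wedge E$, applying $\id\otimes\Delta$ and $\Delta\otimes\id$ to $\Delta x\in D\otimes C+C\otimes E$, using $\Delta(D)\subseteq D\otimes D$, $\Delta(E)\subseteq E\otimes E$ and coassociativity to confine $(\id\otimes\Delta)\Delta x=(\Delta\otimes\id)\Delta x$ to an explicit subspace of $C^{\otimes 3}$, and then applying the counit to one or two of the three factors. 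Finally, I note that the statement asserts only the inclusion for $\Delta(F_n)$ and not exhaustiveness of the filtration; the latter is automatic --- and is what one uses separately --- once the coalgebra is assumed graded and connected.
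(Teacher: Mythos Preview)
The paper does not supply its own proof of this lemma; it merely records it as Theorem~9.1.6 of Sweedler's book \cite{MR0252485} and places a \textsc{qed} box. Your argument is correct and is in fact a faithful reconstruction of Sweedler's proof: Chapter~9 of \cite{MR0252485} develops precisely the wedge operation $U\wedge W=\Delta^{-1}(U\otimes C+C\otimes W)$, identifies $F_n$ with the iterated wedge $R^{\wedge(n+1)}$, shows that wedges of subcoalgebras are subcoalgebras, and then obtains the inclusion by the same intersection-of-kernels bookkeeping you carry out. So there is nothing to compare---you have reproduced the cited source.
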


The following two lemmas are well known for irreducible coalgebras.

\begin{lemma} For $g\in G$ define the subspace of $g$-primitives by 
$$P_g=\{ c\in C \mid \Delta(c)=c\otimes g + g \otimes c\}\,.$$  
Then the inclusion map
$$R\oplus \Bigl( \bigoplus_{g\in G} P_g\Bigr) \lra F_1$$
is an isomorphism.
\end{lemma}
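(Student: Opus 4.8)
The plan is to prove the two inclusions directly from the defining property of $F_1$, using Lemma~\ref{Lemma PF} for one direction and a counting/splitting argument for the other. First I would check that the map is well defined and injective. Injectivity is essentially the statement that the spaces $P_g$ meet $R$ only in scalar multiples of $g$ and are otherwise independent: if $c = \sum_g \lambda_g g + \sum_g p_g$ maps to $0$, apply $\varepsilon$ on one tensor factor of $\Delta$ to each $p_g$ to see that each $p_g$ is forced into $R$, hence a multiple of $g$, and then grouplike independence kills everything. So the real content is surjectivity, i.e.\ that every $c \in F_1$ decomposes as a scalar combination of grouplikes plus a sum of $g$-primitives.

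For surjectivity, take $c \in F_1$. By Lemma~\ref{Lemma PF} we have $\Delta(c) \in F_1 \otimes F_0 + F_0 \otimes F_1 = F_1 \otimes R + R \otimes F_1$. Choose a basis of $F_1$ extending the basis $G$ of $R$, write $\Delta(c)$ in terms of this basis on each side, and compare. The key tool is coassociativity together with the counit axiom $(\varepsilon \otimes \id)\Delta = \id = (\id \otimes \varepsilon)\Delta$: applying $\varepsilon$ to the first factor of $\Delta(c) = \sum a_i \otimes r_i + \sum s_j \otimes b_j$ (with $r_i, s_j \in R$, and after arranging that the $s_j$ are linearly independent grouplikes) recovers $c$ as an explicit combination, and applying it to the second factor pins down the coefficients. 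One then gets $\Delta(c) = \sum_{g} \big(x_g \otimes g + g \otimes y_g\big)$ over the grouplikes appearing, and coassociativity forces $x_g = y_g$ and $\Delta(x_g) = x_g \otimes g + g \otimes x_g$, i.e.\ $x_g \in P_g$; modulo $R$ one reads off $c \equiv \sum_g x_g$, so $c - \sum_g x_g \in F_0 = R$, giving the desired decomposition.

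The step I expect to be the main obstacle is the bookkeeping in this comparison: because $C$ is only pointed (not irreducible), several grouplikes are in play simultaneously, and one must be careful that the "off-diagonal" terms — contributions of the form $g \otimes h$ with $g \neq h$ grouplike, or $x \otimes g$ with $x \in F_1 \setminus R$ paired against a grouplike on the other side — are correctly separated and shown to vanish or to reassemble into the $P_g$ summands. The cleanest way to organize this is to work modulo the subspace $R \otimes C + C \otimes R$ is too coarse; instead I would fix, once and for all, a linear complement $V$ of $R$ in $F_1$ and project $\Delta(c) \in F_1\otimes R + R \otimes F_1$ onto its $V \otimes R$ and $R \otimes V$ components, using coassociativity applied to the $(C \otimes C \otimes C)$ expansion to match these two projections and conclude they are "transposes" of one another supported on the diagonal grouplikes. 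Everything else is the routine linear algebra of pointed coalgebras, and the grading/connectivity hypotheses that make the classical irreducible case easy are exactly what we are doing without here.
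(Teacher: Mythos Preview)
Your approach is essentially the paper's: use Lemma~\ref{Lemma PF} to write $\Delta(c)=\sum_g(a_g\otimes g + g\otimes b_g)$, then exploit coassociativity and the counit to peel off the $P_g$-components, with injectivity coming from linear independence of grouplikes.

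The one genuine imprecision is the assertion that ``coassociativity forces $x_g=y_g$ and $\Delta(x_g)=x_g\otimes g+g\otimes x_g$''. As stated this is false: take $c=g$ grouplike, so $\Delta(c)=g\otimes g$; one may write this with $x_g=g$, $y_g=0$, and neither $x_g=y_g$ nor $g\in P_g$ holds. The non-uniqueness in the decomposition $F_1\otimes R + R\otimes F_1$ (the overlap $R\otimes R$) is exactly what spoils the claim. What coassociativity plus the counit actually yield---and this is what the paper does---is, after applying $\epsilon\otimes 1\otimes\epsilon$ to the coassociativity identity, the relation $c=a_g+\epsilon(b_g)g=\epsilon(a_g)g+b_g$ for each $g$ separately. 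Only after subtracting the explicit $R$-correction $\tilde c = c-\sum_g\epsilon(a_g+b_g)g$ does one obtain an element whose coproduct is genuinely $\sum_g(\tilde c\otimes g + g\otimes \tilde c)$, hence lying in $\bigoplus_g P_g$. Your complement-$V$ workaround would achieve the same normalization, but the paper's direct counit trick is shorter and sidesteps the off-diagonal bookkeeping you correctly flag as the main obstacle.
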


\begin{proof}
By the previous lemma we can write the diagonal of  $c\in F_1$ in the form
$$ \Delta (c) = \sum a_g \otimes  g + g \otimes b_g $$
for suitable $a_g,b_g$ in $F_1$. 
Coassociativity and linear independence of grouplike elements yield for 
each $g\in G$ the equality 
$$ \Delta a_g\otimes g +g \otimes g \otimes b_g = a_g\otimes g \otimes g +g \otimes \Delta b_g\,.$$
Applying $\epsilon \otimes 1 \otimes \epsilon$ to this equation we get
$$ c=a_g+\epsilon (b_g) g=\epsilon(a_g)g + b_g\,.$$
Set $\tilde{c}=c-\sum \epsilon(a_g+b_g)g$. Then 
$$\Delta(\tilde{c}) = \sum (a_g-\epsilon(a_g)\otimes g + g \otimes (b_g-\epsilon(b_g)g)=
\sum \tilde{c}\otimes g+g\otimes \tilde{c}$$
and we conclude 
$\tilde{c}$ lies in $\bigoplus_{g\in G} P_g$.
Hence the map is surjective. Injectivity follows from the linear independence of the 
grouplike elements.
\end{proof}

\begin{convention}
In the sequel we suppose that  $\Sigma$ is a pointed coalgebra over $\mk$ with an exhaustive 
coradical filtration. 
\end{convention}

Let $G$ be a group and suppose we are given an injective algebra map
$$\mk [G]\lra \Sigma$$
whose image is the set of  grouplike elements. Then one verifies  that the canonical map
$$ \varphi: \mk[G]\otimes P_1 \lra \bigoplus_{g\in G} P_g,\quad 
g\otimes \sigma \longmapsto g\sigma$$
is an isomorphism.

Next, suppose $M$ is a right $\Sigma$-comodule with coaction $\psi$. 
Define a filtration $F_k$ for $k\geq 0$ on $M$ by
$$F_k(M)=\psi^{-1}(M\otimes F_{k})\,.$$
Note that this filtration is preserved by maps $f:M\ra M'$ of $\Sigma$-comodules. 
Indeed, if $m$ has filtration $k$ in $M$ and if $\psi'$ denotes the diagonal of $M'$  then
$$ \psi' f(m)=(f\otimes \id)\psi(m)\in f(M)\otimes F_{k}\subset M'\otimes F_{k}$$
and hence $f(m)\in F_k(M')$.
\begin{lemma}
For $g\in G$ define the space of $g$-primitives by
$$ P_g( M)=\{ m\in M | \, \psi (m)= m\otimes g\}$$
and let the space of primitives $P(M)$ be generated by all $P_g(M)$. 
Then the maps  
$  \oplus_{g\in G}P_g(M) \ra P(M) $ and 
$P(M) \ra F_0(M)$ induced by the inclusions
are  isomorphisms.
\end{lemma}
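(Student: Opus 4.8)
The plan is to identify $F_0(M)$ explicitly with the internal sum $\sum_{g\in G}P_g(M)$ and then to check that this sum is direct; the two assertions of the lemma follow at once. This is the comodule analogue of the preceding lemma about $F_1$, and I would model the argument on that proof.

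First I would unwind $F_0(M)=\psi^{-1}(M\otimes F_0)=\psi^{-1}(M\otimes R)$. Since the grouplike elements are linearly independent and $R=\mk[G]$, the set $G$ is a $\mk$-basis of $R$, so tensoring over the field $\mk$ gives $M\otimes R=\bigoplus_{g\in G}M\otimes g$. Hence for $m\in F_0(M)$ there are uniquely determined $m_g\in M$, almost all zero, with $\psi(m)=\sum_{g\in G}m_g\otimes g$. Applying the counit and using $\epsilon(g)=1$ yields $m=\sum_{g}m_g$. Applying $(\psi\otimes\id)$ and $(\id\otimes\Delta)$ to $\psi(m)$ and using coassociativity together with $\Delta(g)=g\otimes g$ gives
$$\sum_{g\in G}\psi(m_g)\otimes g=\sum_{g\in G}m_g\otimes g\otimes g,$$
and comparing the coefficients of the (linearly independent) grouplike elements in the last tensor factor forces $\psi(m_g)=m_g\otimes g$, i.e. $m_g\in P_g(M)$ for every $g$. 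Thus $F_0(M)\subseteq\sum_{g}P_g(M)=P(M)$. The opposite inclusion is immediate, since $\psi(P_g(M))\subseteq M\otimes g\subseteq M\otimes R$ shows $P_g(M)\subseteq F_0(M)$ for each $g$. Therefore $P(M)=F_0(M)$ as subspaces of $M$, and the inclusion-induced map $P(M)\to F_0(M)$ is an isomorphism.

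It remains to see that the canonical map $\bigoplus_{g\in G}P_g(M)\to P(M)$ is an isomorphism. It is surjective by the definition of $P(M)$, so only injectivity is needed. Given a finite relation $\sum_{g}m_g=0$ with $m_g\in P_g(M)$, applying $\psi$ and using $\psi(m_g)=m_g\otimes g$ gives $\sum_{g}m_g\otimes g=0$ inside $M\otimes R=\bigoplus_{g}M\otimes g$; linear independence of the grouplikes then forces $m_g=0$ for all $g$, which finishes the proof.

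I do not expect a genuine obstacle here. The only points demanding care are that it is coassociativity, and not merely Lemma~\ref{Lemma PF}, that promotes the components $m_g$ of $\psi(m)$ from mere summands to honest $g$-primitives, and the repeated use of the identification $M\otimes R=\bigoplus_{g}M\otimes g$, which rests on the linear independence of grouplike elements recalled at the beginning of the section. Note also that, in contrast to the $F_1$-statement, no separate "coradical part" appears: for a comodule the whole of $F_0(M)$ is accounted for by primitives, so $P(M)$ alone plays the role that $R\oplus\bigoplus_{g}P_g$ played there.
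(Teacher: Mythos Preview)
Your argument is correct and follows essentially the same route as the paper: write $\psi(m)=\sum_g m_g\otimes g$, apply the counit to get $m=\sum_g m_g$, and use coassociativity together with the linear independence of the grouplikes to see that each $m_g$ lies in $P_g(M)$. Your version is slightly more explicit in checking the reverse inclusion $P(M)\subseteq F_0(M)$ and in spelling out the injectivity of $\bigoplus_g P_g(M)\to P(M)$, but the substance is the same.
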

\begin{proof}
The first isomorphism follows from the linear independence of the grouplike elements. 
It remains to show surjectivity of the second map. Suppose  $m\in F_0(M)$. 
Then we can write $\psi(m)$ in the form
$$ \psi(m)=\sum_g m_g \otimes g$$
for some $m_g\in M$. When applying $1\otimes \epsilon$ to this equation we obtain
$$m=\sum_g m_g\,.$$
Hence, it suffices to show that $m_g$ lies in $P_g(M)$. Coassociativity implies
$$\sum_g\psi(m_g)\otimes g = \sum_g m_g \otimes g \otimes g$$
and thus $\psi(m_g)=m_g\otimes g$.
\end{proof}
There is another way to think of $g$-primitives. Recall from \cite[A.1.1.4]{MR860042}  that 
the cotensor product $M\Box_\Sigma  M'$ of a right comodule $(M,\psi,\epsilon)$ with a left 
comodule $(M',\psi',\epsilon')$ is defined as the equalizer
of  $\psi {\otimes}  \id_{M'}$ and  $\id_M {\otimes}  \psi'$ .
One readily verifies that the maps
\begin{eqnarray*}
P_g(M) &\lra & M\Box_\Sigma \mk g; \quad m \mapsto m\otimes g \\
P(M)&\lra &  M\Box_\Sigma R; \quad (\sum_g m_g)  \mapsto ( \sum_g  m\otimes g)
\end{eqnarray*}
are isomorphisms.

\begin{lemma} \label{key1}
$\psi(F_n(M))\subset P(M) \otimes F_n +M\otimes F_{n-1}\,.$
\end{lemma}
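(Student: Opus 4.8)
The plan is to induct on $n$, the base case $n=0$ being the statement $\psi(F_0(M))\subset P(M)\otimes F_0$, which is exactly the surjectivity part of the previous lemma (every $m\in F_0(M)$ has $\psi(m)=\sum_g m_g\otimes g$ with $m_g\in P_g(M)\subset P(M)$, and $g\in R=F_0$). For the inductive step, fix $m\in F_n(M)$, so $\psi(m)\in M\otimes F_n$ by definition of the comodule filtration. Apply coassociativity $(\psi\otimes\id)\psi=(\id\otimes\Delta)\psi$ together with Lemma~\ref{Lemma PF}, which tells us $\Delta(F_n)\subset\sum_{i=0}^n F_i\otimes F_{n-i}$. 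Writing $\psi(m)=\sum_j m_j\otimes c_j$ with $c_j\in F_n$, the idea is to split off the "top" part of the filtration: the terms where $c_j\in F_0=R$ are already of the desired form once we know the corresponding $m_j$ are primitive, and the remaining terms will have their $\Sigma$-tensor factor landing in lower filtration.

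First I would make this precise by choosing a complement, i.e. writing $F_n=F_{n-1}\oplus W$ as vector spaces, and decomposing $\psi(m)=x+y$ with $x\in M\otimes F_{n-1}$ and $y\in M\otimes W$. The goal is then to show $y\in P(M)\otimes F_n$. Applying $\id_M\otimes\Delta$ to $y$ and using Lemma~\ref{Lemma PF}, the component of $(\id\otimes\Delta)(y)$ in $M\otimes (C/R)^{\otimes 2}$ must vanish modulo things coming from $x$, because $(\psi\otimes\id)\psi(m)=(\id\otimes\Delta)\psi(m)$ and the left side, after projecting $M$ to $M\otimes\Sigma$ and then to $M\otimes(C/R)$ in the middle slot, is controlled by $\psi(m_j)$. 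The cleanest route: project the middle tensor factor of $(\psi\otimes\id)\psi(m)=(\id\otimes\Delta)\psi(m)$ onto $C/R$. On the right-hand side, by Lemma~\ref{Lemma PF} applied to each $c_j\in F_n$, the middle factor lies in $R$ exactly when $c_j$'s "$F_0\otimes F_n$" piece is hit; more carefully, the $(C/R)$-component of the middle slot of $\Delta(c_j)$ forces $c_j\in F_{n-1}$ unless we are looking at the $R\otimes F_n$ summand. Combining these constraints shows that the "new" part $y$ satisfies $\psi(m_j)=m_j\otimes(\text{grouplike})$ for the relevant coefficients, i.e. those $m_j$ are primitive.

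The main obstacle, and where the real work lies, is bookkeeping the mixed terms: after applying coassociativity one gets an identity in $M\otimes\Sigma\otimes\Sigma$, and one must disentangle contributions where the first tensor factor of $\psi(m_j)$ is itself not yet known to be primitive but has lower filtration. This is handled by a secondary induction (or by applying the inductive hypothesis to $m_j\in F_{k}(M)$ with $k<n$, which is legitimate since $\psi(m)\in M\otimes F_n$ forces the coefficients $m_j$ paired with $F_0$ to lie in $F_0(M)$ after projecting, and more generally the coefficients paired with $F_i$ to lie in $F_{n-i}(M)$ — itself a small lemma proved by the same coassociativity trick). Once the inductive hypothesis is available for all $m_j$ appearing with $c_j\notin R$, those contributions are absorbed into $P(M)\otimes F_n+M\otimes F_{n-1}$, and the terms with $c_j\in R$ contribute precisely $P(M)\otimes F_0\subset P(M)\otimes F_n$ by the base case. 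Assembling these pieces gives $\psi(F_n(M))\subset P(M)\otimes F_n+M\otimes F_{n-1}$, completing the induction.
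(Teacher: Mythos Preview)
Your second paragraph contains the right idea and is essentially what the paper does, but the induction on $n$ that frames your whole argument is unnecessary, and some of the surrounding description is garbled.

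The paper's proof is direct, with no induction. Pick representatives $\sigma$ for a basis of $F_n/F_{n-1}$ and write $\psi(m)=\sum_\sigma m_\sigma\otimes\sigma$ modulo $M\otimes F_{n-1}$. Apply $(\psi\otimes\id)\psi=(\id\otimes\Delta)\psi$ and work modulo terms whose \emph{last} tensor factor lies in $F_{n-1}$. On the right-hand side, Lemma~\ref{Lemma PF} gives $\Delta(\sigma)\in F_0\otimes F_n$ modulo $\Sigma\otimes F_{n-1}$, i.e.\ $\Delta(\sigma)\equiv\sum_g g\otimes\sigma_g$. Comparing coefficients of the basis elements $\sigma$ in the third slot yields $\psi(m_\sigma)\in M\otimes F_0$, so $m_\sigma\in F_0(M)=P(M)$. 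That's the whole proof.

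Two specific issues with your write-up. First, your opening paragraph has the roles reversed: for $n\geq 1$ the terms with $c_j\in R=F_0$ already sit in $M\otimes F_{n-1}$, so they need no attention; it is the terms with $c_j$ representing a nonzero class in $F_n/F_{n-1}$ whose $M$-coefficients must be shown primitive. Second, the ``secondary induction'' of your third paragraph is never actually needed. Knowing that $m_j\in F_k(M)$ for some $k<n$ and then applying the inductive hypothesis only tells you $\psi(m_j)\in P(M)\otimes F_k+M\otimes F_{k-1}$, which does not by itself force $m_j$ to be primitive; you would still need the direct computation from your second paragraph, at which point the induction has bought you nothing. Strip away the inductive scaffolding and keep only the coassociativity computation modulo $M\otimes\Sigma\otimes F_{n-1}$, and you have exactly the paper's argument.
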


\begin{proof} 
Choose a set of representatives $\sigma$ for a basis in $F_n/ F_{n-1}$. 
For $m \in F_{n}(M)$ write $\psi(m)$ in the form 
$$ \psi(m) = \sum_\sigma m_\sigma \otimes \sigma  + \mbox{ terms in } M\otimes F_{n-1}\,.$$
Then coassociativity and Lemma \ref{Lemma PF}  yield modulo terms in  $ M\otimes F_{n-1}$
\begin{align*}
\sum_\sigma \psi(m_\sigma) \otimes \sigma &= \sum _\sigma m_\sigma \otimes \psi (\sigma)
= \sum _{\sigma,g} m_\sigma \otimes g\otimes \sigma_g.
\end{align*}
Let $\left< \sigma_g ,  \tau \right> $ be the coefficient of $\sigma_g$ with respect to 
the basis element $\tau$. Then the last equation gives
$$ \psi(m_\sigma)= \sum_{\tau,g}m_\tau \otimes \left< \tau_g, \sigma\right>g\, 
\subset M\otimes F_0\,.$$
Thus $m_\sigma$ has filtration 0, whence the claim. 
\end{proof}

In case  $M=\Sigma$ we have the two filtrations $F_k$ and $F_k(\Sigma)$. 
There is yet another filtration given by 
$$\bar{F}_k(\Sigma)= \psi^{-1}(F_{k}\otimes \Sigma)\,.$$
Fortunately, Lemma \ref{Lemma PF} says that
all three filtrations agree.

\begin{definition}
Suppose the comodule $M$ is equipped with maps of comodules $\eta:\mk \lra M$ and  
$\epsilon\colon M\lra \mk$ which satisfy $\epsilon \eta = \id$. We write $1$ for the image of~1 
under $\eta$ as well. Define the graded left primitives by 
$$\bar{P}_1Gr_{k}(M)=\{ m\in M | \,  \psi (m) 
=1 \otimes \sigma \text{ mod } M\otimes F_{k-1} \text{ for some } \sigma \in F_{k} \}\,.$$
We say a map of comodules $f$ is $\star$-{\em surjective} if it is surjective and the induced 
map on graded left primitives is surjective for all $k$.
\end{definition}

\begin{lemma}\label{lemma2.6}
Suppose $M$ is a  $\Sigma$-comodule $\mk[G]$-algebra. Then the map
$$ \varphi: \mk[G]\otimes P_1(M) \lra P(M); \quad g \otimes m \mapsto gm $$
is an isomorphism. 
\end{lemma}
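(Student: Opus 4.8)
The plan is to reduce the assertion to a one-group-element-at-a-time statement and then invoke invertibility. First I would note that $\varphi$ is compatible with the evident direct-sum decompositions on both sides: the source is $\mk[G]\otimes P_1(M)=\bigoplus_{g\in G}\mk g\otimes P_1(M)$, and $P(M)=\bigoplus_{g\in G}P_g(M)$ by the earlier lemma computing the primitives of $M$. Once one checks that $\varphi$ carries the summand $\mk g\otimes P_1(M)$ into $P_g(M)$, it is the direct sum of the maps $P_1(M)\to P_g(M)$, $m\mapsto gm$, and it then suffices to prove that each of these is an isomorphism of $\mk$-vector spaces.

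For well-definedness and this reduction I would use both structures on $M$: because $M$ is a $\Sigma$-comodule $\mk[G]$-algebra, the coaction $\psi$ is an algebra map and the structure map $\mk[G]\to M$ is a map of comodules, so the image of a group element (still written $g$) satisfies $\psi(g)=g\otimes g$. Hence for $m\in P_1(M)$,
$$\psi(gm)=\psi(g)\,\psi(m)=(g\otimes g)(m\otimes 1)=gm\otimes g,$$
so $gm\in P_g(M)$; symmetrically, for $n\in P_g(M)$ one computes $\psi(g^{-1}n)=(g^{-1}\otimes g^{-1})(n\otimes g)=g^{-1}n\otimes 1$, so left multiplication by $g^{-1}$ sends $P_g(M)$ into $P_1(M)$.

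Finally, since $g$ is the image of a group element it is a unit of $M$ with inverse $g^{-1}$, so left multiplication by $g$ is a bijection of $M$ whose inverse is left multiplication by $g^{-1}$; by the previous step these restrict to mutually inverse maps $P_1(M)\leftrightarrow P_g(M)$. Summing over $g\in G$ then shows $\varphi$ is an isomorphism. I do not anticipate a genuine obstacle here; the only thing that needs attention is keeping the two structures on $M$ straight — in particular verifying that $\psi$ is multiplicative and that the image of $G$ coacts diagonally — which is exactly what the hypothesis "$\Sigma$-comodule $\mk[G]$-algebra" is designed to encode. In effect this is the comodule-algebra analogue of the isomorphism $\mk[G]\otimes P_1\xrightarrow{\ \sim\ }\bigoplus_g P_g$ recorded above for $\Sigma$ itself, with the invertible grouplike elements replaced by the units $g\in M$.
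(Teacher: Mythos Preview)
Your argument is correct and essentially identical to the paper's: the paper verifies $\psi(gm)=(g\otimes g)(m\otimes 1)=gm\otimes g$ and then simply writes down the inverse $\varphi^{-1}\bigl(\sum_g m_g\bigr)=\sum_g g\otimes(g^{-1}m_g)$, which is exactly your summand-wise inverse assembled over $G$. Your explicit remark that one needs $\psi$ to be multiplicative and the image of $G$ to be grouplike in $M$ is well taken; the paper uses this tacitly here (and states it as a hypothesis in the subsequent theorem).
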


\begin{proof}
For $m\in P_1(M)$ the calculation
$$\psi(gm)=(g\otimes g)(m\otimes 1 ) = gm \otimes g$$
shows $gm\in P_g(M)$. An inverse map is given by 
$$ \varphi^{-1}(m)=\varphi^{-1}(\sum_gm_g)=\sum_gg \otimes (g^{-1}m_g)\,.$$
\auxqed[1]
\end{proof}
A version of the classical Milnor-Moore theorem is the statement that a  graded connected Hopf algebra is free over each of its sub Hopf algebras. A generalization of the dual statement for comodules is \cite[Proposition 2.6] {MR0174052}. For graded connected Hopf algebroids the result can be found in \cite[A1.1.17]{MR860042}. The following result is a generalization to comodules over pointed coalgebras with an exhaustive coradical filtration.
\begin{thm}\label{MM}
Let $M$ be a  right $\Sigma$-comodule $\mk[G]$-algebra. Suppose that $G$ is grouplike in $M$. 
Let $f\colon M\lra \Sigma$ be a $\Sigma$-comodule and $\mk[G]$-algebra map which is 
$\star $-surjective. Then  there is an isomorphism of $\Sigma$-comodules
$$h: M \lra P_1(M) \otimes \Sigma\,.$$
\end{thm}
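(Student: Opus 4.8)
The plan is to present $P_1(M)\otimes\Sigma$, with coaction $\id\otimes\Delta$, as the cofree $\Sigma$-comodule on the vector space $P_1(M)$, and to build $h$ from a suitable linear retraction of $M$ onto $P_1(M)$. Composing with $\id_{P_1(M)}\otimes\epsilon_\Sigma$ gives a bijection between $\Sigma$-comodule maps $h\colon M\to P_1(M)\otimes\Sigma$ and $\mk$-linear maps $\bar h\colon M\to P_1(M)$, inverse to $\bar h\mapsto(\bar h\otimes\id_\Sigma)\circ\psi$; this is a one-line check with coassociativity of $\psi$. Any $h=(\bar h\otimes\id)\circ\psi$ automatically respects the filtrations, $h(F_n(M))\subseteq P_1(M)\otimes F_n$, since $\psi(F_n(M))\subseteq M\otimes F_n$ by the very definition of $F_n(M)$. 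Both filtrations are exhaustive (the one on $\Sigma$ by hypothesis) and satisfy $F_{-1}=0$ (indeed $F_{-1}(M)=\psi^{-1}(M\otimes F_{-1})=0$ because $\psi$ is split injective through its counit, and similarly for the target), so a filtered map that is an isomorphism on every associated graded quotient $\mathrm{gr}_n=F_n/F_{n-1}$ is itself an isomorphism. Thus the whole problem reduces to choosing $\bar h$ so that $\mathrm{gr}_n h\colon\mathrm{gr}_n M\to P_1(M)\otimes\mathrm{gr}_n\Sigma$ is bijective for every $n$.

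Next I would make $\mathrm{gr}_n h$ explicit. By Lemma~\ref{key1}, for $m\in F_n(M)$ we can write $\psi(m)\equiv\sum_\alpha p_\alpha\otimes\sigma_\alpha\pmod{M\otimes F_{n-1}}$ with $p_\alpha\in P(M)$ and $\{\sigma_\alpha\}$ lifting a basis of $\mathrm{gr}_n\Sigma$; the $p_\alpha$ depend only on the class $\bar m\in\mathrm{gr}_n M$ and vanish when $m\in F_{n-1}(M)$. So $\mathrm{gr}_n\psi\colon\mathrm{gr}_n M\to P(M)\otimes\mathrm{gr}_n\Sigma$, $\bar m\mapsto\sum_\alpha p_\alpha\otimes\bar\sigma_\alpha$, is well defined and, because $F_n(M)=\psi^{-1}(M\otimes F_n)$, injective, and $\mathrm{gr}_n h=(\bar h\otimes\id)\circ\mathrm{gr}_n\psi$. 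Hence $\mathrm{gr}_n h$ depends only on $\bar h|_{P(M)}$, and it is an isomorphism precisely when $\bar h\otimes\id$ carries the image $D_n:=\mathrm{im}(\mathrm{gr}_n\psi)\subseteq P(M)\otimes\mathrm{gr}_n\Sigma$ isomorphically onto $P_1(M)\otimes\mathrm{gr}_n\Sigma$. For $\bar h$ I would take on $P(M)=\bigoplus_{g}P_g(M)$ the ``untwisting'' projection $\sum_g x_g\mapsto\sum_g g^{-1}x_g$ (that is, $\varphi^{-1}$ of Lemma~\ref{lemma2.6} followed by the augmentation $\mk[G]\to\mk$) and anything, say $0$, on a complement of $P(M)$. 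For $n=0$ one computes directly that $\mathrm{gr}_0\psi$ sends $\sum_g x_g$ to $\sum_g x_g\otimes g$, so $D_0=\bigoplus_g P_g(M)\otimes\mk g$, and this $\bar h$ carries $D_0$ isomorphically onto $P_1(M)\otimes\mk[G]=P_1(M)\otimes\mathrm{gr}_0\Sigma$.

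The crux is the identification of $D_n$ for $n>0$ together with the check that the same $\bar h$ still untwists it. Applying $(\psi\otimes\id)\psi=(\id\otimes\Delta)\psi$ to $m\in F_n(M)$, reducing modulo $M\otimes\Sigma\otimes F_{n-1}$, writing $p_\alpha=\sum_{g}p_\alpha^{(g)}$ with $p_\alpha^{(g)}\in P_g(M)=g\,P_1(M)$, using that $G$ is grouplike in $M$ so that $\psi(p_\alpha^{(g)})=p_\alpha^{(g)}\otimes g$, comparing coefficients of the linearly independent grouplikes in the middle tensor slot, and controlling $\Delta\sigma_\alpha$ by Lemma~\ref{Lemma PF}, one should extract linear relations pinning down the components $p_\alpha^{(g)}$ through the $\mk[G]$-action that $f$, being a $\mk[G]$-algebra map, relates between $M$ and $\Sigma$; this confines $D_n$ to a twisted-diagonal subspace of $P(M)\otimes\mathrm{gr}_n\Sigma$ which the above $\bar h$ sends isomorphically onto $P_1(M)\otimes\mathrm{gr}_n\Sigma$. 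The reverse inclusion, that $\mathrm{gr}_n\psi$ fills out all of $D_n$, is where $\star$-surjectivity of $f$ enters: surjectivity of $f$ on the graded left primitives $\bar P_1 Gr_k(M)\to\bar P_1 Gr_k(\Sigma)$, together with the $G$-action, supplies enough elements of $M$ to hit a spanning set of $D_n$ under $\mathrm{gr}_n\psi$. Granting this, $\mathrm{gr}_n\psi\colon\mathrm{gr}_n M\xrightarrow{\ \sim\ }D_n$, so each $\mathrm{gr}_n h$ is a composite of isomorphisms, and $h=(\bar h\otimes\id_\Sigma)\circ\psi$ is the required $\Sigma$-comodule isomorphism. (An equivalent route: $\Sigma$ is cofree, hence injective, as a comodule over itself, so the surjection $f$ admits a comodule section $s\colon\Sigma\to M$, and one may instead examine the comodule map $P_1(M)\otimes\Sigma\to M$, $m\otimes\sigma\mapsto m\,s(\sigma)$; the bookkeeping on associated gradeds is the same.)

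I expect the main obstacle to be precisely this third step: reading off, from the single coassociativity identity taken modulo the coradical filtration, the exact relations among the components $p_\alpha^{(g)}$ that cut out $D_n$, and recognizing that $\star$-surjectivity of $f$ is exactly what upgrades the a priori inclusion $\mathrm{im}(\mathrm{gr}_n\psi)\subseteq D_n$ to an equality. The cofree adjunction, the reduction to associated graded, and the final assembly are formal; all the genuine input, from the Milnor-Moore lemmas and from the map $f$, is concentrated there.
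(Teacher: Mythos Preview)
Your approach is essentially the paper's: the map $h=(\bar h\otimes\id)\circ\psi$ with $\bar h$ the untwisting projection $P(M)\to P_1(M)$ is exactly the paper's composite $(\epsilon\otimes\id\otimes\id)(\varphi^{-1}\otimes\id)(r\otimes\id)\psi$, and both arguments check bijectivity by induction on the coradical filtration. Your cofree--adjunction and associated--graded language is a tidy repackaging of the same computation.

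One point of imprecision worth flagging: you fold $f$ into the step that ``confines $D_n$ to a twisted--diagonal subspace,'' but $f$ plays no role there. The coassociativity identity alone, together with $\mathrm{gr}_n\Sigma=\bigoplus_g Gr_{n,g}$ (the left $g$-primitives in $\mathrm{gr}_n\Sigma$), forces $D_n\subseteq\bigoplus_g P_g(M)\otimes Gr_{n,g}$, and on that subspace $\bar h\otimes\id$ is visibly injective (indeed an isomorphism onto $P_1(M)\otimes\mathrm{gr}_n\Sigma$); this is the paper's injectivity half. The map $f$ enters only in surjectivity: given $\sigma_g\in Gr_{n,g}$, one needs $m\in\bar P_1 Gr_n(M)$ with $\psi(m)\equiv 1\otimes g^{-1}\sigma_g$ modulo $M\otimes F_{n-1}$, and $\star$-surjectivity provides $m$ with $f(m)=g^{-1}\sigma_g$; that $\psi(m)$ then has the required form uses that $f$ is a \emph{comodule} map (apply $f\otimes\id$ to $\psi(m)=1\otimes\tau$ and compare with $\Delta(f(m))$ to get $\tau\equiv g^{-1}\sigma_g$). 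With that correction your outline matches the paper's proof.
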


\begin{proof}
Choose a linear  left  inverse  $r$   of the inclusion map $i:  P(M)\lra M$. 
Define $h$ as the composite
$$M\stackrel{\psi}{\lra}M\otimes \Sigma \xrightarrow{r \otimes\id} P(M)\otimes \Sigma
\xrightarrow{\varphi^{-1}\otimes\id} \mk[G]\otimes P_1(M) \otimes \Sigma 
\xrightarrow{\epsilon\otimes\id\otimes\id}P_1(M) \otimes \Sigma\,$$
with $\varphi$ as in Lemma \ref{lemma2.6}.
First we show that $h$ is injective. Let $m\in F_{n}(M)$ be in the kernel of $h$.
As in Lemma \ref{key1}  write  $$\psi(m) = \sum m_\sigma \otimes \sigma + \mbox{ terms in } M\otimes F_{n-1}$$
with $m_\sigma \in P(M)$. Write  $m_\sigma=\sum_gm_{\sigma, g}$, $m_{\sigma , g}\in P_g(M)$. Again, coassociativity implies modulo terms of lower filtration the equality 
$$ \sum_\sigma m_{\sigma,g} \otimes \sigma = \sum _\sigma m_{\sigma, g} \otimes \sigma_g\,.$$ Here, $\sigma_g$ is the term which comes up in 
$$ \psi(\sigma)= \sum_g g \otimes \sigma_g + \text{ terms of lower filtration.}$$
Calculating modulo  $ M\otimes F_{n-1}$
\begin{align*}
h(m) &= (\epsilon \otimes \id\otimes \id)(\varphi^{-1}r \otimes \id)\psi(m) 
=  \sum_{\sigma,g} (\epsilon \otimes \id\otimes \id ) \varphi^{-1}(m_{\sigma,g} ))\otimes \sigma_g 
\\ &=  \sum_{\sigma, g} (\epsilon \otimes \id\otimes \id ) 
  (g\otimes g^{-1}m_{\sigma ,g})\otimes \sigma_g
=  \sum_{\sigma, g}  g^{-1}m_{\sigma ,g}\otimes \sigma_g.
\end{align*}
Set
$$Gr_{n,g}=\{ \sigma \in F_n/F_{n-1}\mid \psi(\sigma) 
 = g \otimes \sigma  \mbox{ mod } M\otimes F_{n-1} \} \,.$$
The map
\begin{align*}
\bigoplus_{g\in G} P_g(M) \otimes Gr_{n,g} &\lra  P_1(M) \otimes  \bigoplus_{g\in G} Gr_{n,g}\\
m_g\otimes \sigma_g &\longmapsto g^{-1}m_g \otimes \sigma_g
\end{align*}
is an isomorphism. Thus the calculation of $h(m)$ implies that $\psi(m)$ vanishes modulo terms 
in $M \otimes F_{n-1}$. This means that $m$ has filtration $m$ and an obvious induction shows 
that $h$ is injective.

It remains to show that $h$ is onto. For an element $\sigma\in F_n$ write modulo terms in 
$\Sigma \otimes F_{n-1}$ 
$$\psi(\sigma)=\sum g \otimes \sigma_g $$
with $\sigma =\sum_g \sigma_g$ and $\psi(\sigma_g) = g \otimes \sigma_g$. 
It suffices to show that $n\otimes \sigma_g$ for $n\in P_1(M)$ lies in the image of $h$ 
modulo terms in  $P_1(M) \otimes F_{n-1}$ . Choose an inverse $m$ of $g^{-1}\sigma_g$ 
in $\bar{P}_1Gr_n(M)$. Then modulo those terms we have
$\psi(gm) = g \otimes \sigma_g$ and hence
$$ h(gmn)= (\epsilon\otimes \id) \varphi^{-1} (gn)\otimes \sigma_g = n \otimes \sigma_g\,.$$
The claim now follows by induction.
\end{proof}

\section{The spectrum $T_2\wedge MString$.}
Let $p$ be a prime and let $E=E_n$ be the height $n$ Morava $E$-theory spectrum 
at the prime $p$. 
Let $\mathfrak m=(p,u_1,u_2,\ldots )$ be the unique homogeneous maximal ideal 
in the coefficient ring
$$E_*\cong W\F_{p^n}[\![u_1,u_2,\ldots u_{n-1}]\!][u,u^{-1}]\,.$$
Then $K=E/\mathfrak m$ is a variant of the Morava $K$-theory spectrum with the same
Bousfield localization as $K(n)$. The group $\Gamma$ of ring spectrum automorphisms 
of~$E$ is a version of the Morava stabilizer group.

\begin{thm}\cite{MR2076002}
The inclusion of $\Gamma$ as a subgroup of $E^0E$ induces  an isomorphism of the completed group ring 
$$E^*[\![\Gamma]\!] \lra E^*E\,.$$
Dually, we have the isomorphism $$E_*^\vee E\lra C(\Gamma,E_*)$$
between the homotopy groups of the $K(2)$-localized product $E\wedge E$ 
and  the ring of continuous maps from the profinite group $\Gamma$ to $E_*$.
\end{thm}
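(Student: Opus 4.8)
The plan is to construct the comparison map out of the $\Gamma$-action on $E$, establish the $K(n)$-homology version of the statement by reducing modulo the maximal ideal $\mathfrak m$ to Morava's computation, and then derive the cohomology version by continuous $E_*$-duality. By the Goerss--Hopkins--Miller rigidity theorem the space of (homotopy) ring automorphisms of $E$ is homotopically discrete with $\pi_0$ the extended Morava stabilizer group, which acts on $E_*$ through the Lubin--Tate action on deformations of the chosen formal group over $\F_{p^n}$; this identifies the $\Gamma$ in the statement and, in particular, records its $\F_{p^n}$-semilinear Galois part. For $g\in\Gamma$ the composite
$$ E\wedge E \xrightarrow{\ 1\wedge g\ } E\wedge E \xrightarrow{\ \mu\ } E $$
is a map of left $E$-modules, so after $K(n)$-localization and passage to homotopy it gives an $E_*$-linear map $E_*^\vee E\to E_*$; letting $g$ vary, and checking continuity (e.g.\ via the pro-Galois structure of $E$ over $L_{K(n)}S^0$), these assemble to a map of $E_*$-algebras $E_*^\vee E\to C(\Gamma,E_*)$. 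Applying $F(-,E)$ to the maps $1\wedge g$ and taking $\pi_{-*}$ produces the ring map $E^*[\![\Gamma]\!]\to E^*E$ of the statement, so it is enough to prove the homology map is an isomorphism.

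Both sides of $E_*^\vee E\to C(\Gamma,E_*)$ are pro-free $E_*$-modules concentrated in even degrees. For $C(\Gamma,E_*)$ this holds because $\Gamma$ is profinite, so $C(\Gamma,E_*)=\varprojlim_U E_*[\Gamma/U]$ is an inverse limit of finitely generated free modules. For $E_*^\vee E$ it follows from Landweber exactness of $E$, which yields $E_*E\cong E_*\otimes_{MU_*}MU_*MU\otimes_{MU_*}E_*$, flat over $E_*$ and even, together with the standard comparison identifying $\pi_*L_{K(n)}(E\wedge X)$ with the zeroth left derived $\mathfrak m$-completion of $E_*X$ and showing the higher derived completions vanish when $E_*X$ is flat and even. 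Hence reduction modulo $\mathfrak m$ loses no information and $E_*^\vee E/\mathfrak m\cong K_*E$, so our map becomes $K_*E\to C(\Gamma,K_*)$. Unravelling the construction identifies this with Morava's isomorphism: $K_*E$ is the continuous $K_*$-linear dual of the Morava stabilizer algebra, which is $C(\Gamma,K_*)$ with $K_*=\F_{p^n}[u^{\pm}]$ (see \cite[Ch.~6]{MR860042}); conceptually this is Lubin--Tate rigidity over $\F_{p^n}$-algebras --- two deformations of the fixed formal group with the same reduction admit a unique $\star$-isomorphism, so the scheme of such isomorphisms is $\Gamma\times\mathrm{Spec}\,K_*$.

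A map of pro-free $E_*$-modules that induces an isomorphism modulo $\mathfrak m$ is itself an isomorphism (complete Nakayama), so $E_*^\vee E\xrightarrow{\ \cong\ }C(\Gamma,E_*)$. For the cohomological statement, pro-freeness of $E_*^\vee E$ makes the universal-coefficient map $E^*E\to\Hom^c_{E_*}(E_*^\vee E,E_*)$ an isomorphism (the $K(n)$-local duality between $E$-homology and $E$-cohomology, valid when $E$-homology is pro-free); substituting $E_*^\vee E\cong C(\Gamma,E_*)$ and the elementary identification $\Hom^c_{E_*}(C(\Gamma,E_*),E_*)\cong E_*[\![\Gamma]\!]$ --- the continuous $E_*$-linear dual of functions on a profinite set is the completed group ring, and the algebra structures match up --- gives $E^*E\cong E^*[\![\Gamma]\!]$, compatibly with the map built in the first paragraph.

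The main obstacle is the $K(n)$-local bookkeeping behind the two pro-freeness assertions and the mod-$\mathfrak m$ identification: one must know that $K(n)$-localization of $E\wedge E$ is exactly $\mathfrak m$-adic completion with no $\varprojlim^1$ or higher derived completion, that the resulting module really is pro-free, and that the classical Morava-stabilizer-algebra computation actually computes the reduction of the map built from the $\Gamma$-action --- in particular that the full group $\Gamma$, including its Galois part, shows up rather than only the pro-$p$ Morava stabilizer group. A shorter but less self-contained alternative replaces the middle two paragraphs by quoting the Devinatz--Hopkins equivalence $L_{K(n)}(E\wedge E)\simeq \mathrm{Map}_c(\Gamma,E)$ of $E$-modules (continuous mapping spectrum) and observing that its homotopy is $C(\Gamma,E_*)$ since $E_*$ is even and mapping out of a profinite set introduces no higher derived functors.
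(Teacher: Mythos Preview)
The paper does not prove this theorem at all: it is stated with the citation \cite{MR2076002} and used as a black box. So there is no proof in the paper to compare your proposal against.

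That said, your sketch is a faithful outline of the argument in the cited reference (Hovey, \emph{Operations and co-operations in Morava $E$-theory}). The ingredients are exactly the standard ones: construct the comparison map from the Goerss--Hopkins--Miller action, use Landweber exactness and the $K(n)$-local completion theorem to see that $E_*^\vee E$ is pro-free and even, reduce modulo $\mathfrak m$ to Morava's identification of $K_*E$ with continuous functions on the stabilizer group, and lift by complete Nakayama; then dualize for the cohomological statement. Your closing remark about the Devinatz--Hopkins equivalence $L_{K(n)}(E\wedge E)\simeq \mathrm{Map}_c(\Gamma,E)$ is also on target as an alternative packaging.

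One small point worth flagging: in the present paper the authors immediately restrict to the version of $E$ with coefficients $\Z_2[\![u_1]\!][u^{\pm}]$ (over $\F_2$ rather than $\F_4$), precisely so that $\Sigma=K_0E$ becomes a genuine Hopf algebra rather than a Hopf algebroid. Your discussion of the Galois part of $\Gamma$ and the $\F_{p^n}$-semilinearity is correct in general but is not the setting the paper actually uses downstream, so if you were writing this up for inclusion here you would want to specialize accordingly.
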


The objects $(E_* ,E^\vee_*E)$ and $C(\Gamma,E_*)$ are graded formal Hopf algebroids. 
In fact, they are evenly graded and since $E_*$ has a unit in degree 2 we may as well restrict 
our attention to the degree 0 part. Let $V$ be $BP_*$ as an ungraded ring and let $VT$ be 
the ungraded ring $BP_*BP$. Then Landweber exactness furnishes the equivalence
$$E_0^\vee E\cong E_0\otimes_V VT[t_0,t_0^{-1}] \otimes_V E_0.$$
The algebra  $K_0 E$ is the reduction modulo the maximal ideal $\mathfrak m$ and hence itself carries the structure of a Hopf algebroid. 
(cf. \cite[Proposition 3.8]{MR2076002}). \par
The paper at hand deals with the case $p=2$ and $n=2$. 
Here, it will prove useful to employ a version of the Morava $E$-theory which comes from the 
deformation of the elliptic curve
$$C:\quad y^2+y=x^3\,.$$
The Hopf algebroid  $\Sigma = K_0 E$ is not pointed when we consider 
the curve over $\F_4$. Hence, we will only consider it over $\F_2$ so that 
$$E_*=\Z_2[ \![u_1,u_2,\ldots u_{n-1}]\!][u,u^{-1}]\,.$$
The results stated above hold without changes. Since now left and right unit coincide modulo the ideal $\mathfrak m$ the ring $\Sigma  =  K_0 E$ actually carries the structure of a Hopf algebra.
Explicitly we have 
(see \cite{MR2076002})
$$\Sigma  = \F_2[t_0,t_1,\ldots]/(t_0^3-1,t_2^4-t_1,t_2^4-t_2,\ldots )\,.$$

\begin{lemma}
The  Hopf algebra $\Sigma$ is pointed and has an exhaustive coradical filtration.
\end{lemma}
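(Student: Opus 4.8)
The plan is to verify the two assertions separately, starting from the explicit presentation
$$\Sigma = \F_2[t_0,t_1,t_2,\ldots]/(t_0^3-1,\ t_1^4-t_1,\ t_2^4-t_2,\ \ldots)$$
given just above. First I would identify the grouplike elements. The quotient relation $t_0^3=1$ makes $t_0$ a unit of order dividing $3$, and since we work over $\F_2$ the element $t_0$ is automatically grouplike (its coproduct is primitive-looking $\Delta t_0 = t_0\otimes t_0$ in the Hopf algebroid structure, which survives reduction mod $\mathfrak m$). Thus $\langle t_0\rangle\cong\Z/3 = G$ sits inside $\Sigma$ as grouplikes, giving the coradical $R=\F_2[G]=\F_2[t_0]/(t_0^3-1)$. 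To show $\Sigma$ is \emph{pointed} I would argue that every simple subcoalgebra is one-dimensional. The cleanest route is to exhibit $\Sigma$ as an increasing union of finite-dimensional subcoalgebras (using that each $t_i$ satisfies a monic polynomial relation, so the subalgebra generated by $t_0,\ldots,t_m$ is finite over $\F_2$) and to observe that on each finite stage the coalgebra is, after dualizing, a finite-dimensional commutative $\F_2$-algebra; a finite-dimensional commutative algebra over a field is a product of local rings, and the simple subcoalgebras correspond dually to the residue fields, which here are all copies of $\F_2$ (there are no proper separable extensions appearing because the only ``new'' grouplike content is the cyclic group of order $3$, which over $\F_2$ contributes a separable factor $\F_2[t_0]/(t_0^3-1)=\F_2\times\F_4$ — and this is the one place I would need to be careful).

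The subtlety in the previous paragraph is precisely the factor $\F_4$: over $\F_2$ the polynomial $t_0^3-1$ factors as $(t_0-1)(t_0^2+t_0+1)$, so $R=\F_2[t_0]/(t_0^3-1)\cong \F_2\times\F_4$ as an algebra, and dually $R$ as a coalgebra is \emph{not} a sum of $1$-dimensional coalgebras in the naive sense. However, ``pointed'' only requires that all \emph{simple subcoalgebras} be $1$-dimensional, and the simple subcoalgebras of $\F_2[G]$ for $G$ a finite group are exactly the spans $\F_2 g$ of the grouplike elements $g\in G$ — these are $1$-dimensional regardless of how the group algebra decomposes as an algebra. So the correct statement to prove is: the grouplike elements of $\Sigma$ are exactly $\{1,t_0,t_0^2\}$, and every simple subcoalgebra of $\Sigma$ is the span of a grouplike. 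For the first I would use that a grouplike element $g$ satisfies $\Delta g=g\otimes g$, $\epsilon(g)=1$, compute what this forces on a general element written in the monomial basis coming from the presentation, and conclude; the relations $t_i^{2^i}=t_i$ for $i\geq 1$ (so $t_i$ is idempotent-like, not invertible) prevent any higher $t_i$ from contributing a grouplike. For the second I would again pass to the finite subcoalgebras and use that a simple subcoalgebra of a finite-dimensional pointed coalgebra is $1$-dimensional, reducing to showing each finite stage is pointed — which follows because its dual algebra is a finite product of copies of $\F_2$ together with the single $\F_4$ factor coming from $t_0$, and $\F_4$ viewed as a coalgebra over $\F_2$ still has only $\F_2$-points, hence still only $1$-dimensional simple subcoalgebras after we remember that the grouplikes of $\F_2[\Z/3]$ are literally $1,t_0,t_0^2$.

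For exhaustiveness of the coradical filtration $F_k=\ker(\Sigma\to(\Sigma/R)^{\otimes(k+1)})$, the plan is to use the union decomposition again together with Lemma \ref{Lemma PF}. Write $\Sigma=\bigcup_m \Sigma_m$ where $\Sigma_m$ is the sub-Hopf-algebra generated by $t_0,t_1,\ldots,t_m$; each $\Sigma_m$ is finite-dimensional over $\F_2$, hence each is a finite-dimensional pointed coalgebra, and the coradical filtration of a finite-dimensional coalgebra is automatically finite and exhaustive (it stabilizes at $\dim \Sigma_m$ at the latest). One checks that the coradical filtration of $\Sigma$ restricts to that of $\Sigma_m$: since $R\subset\Sigma_m$ and $\Sigma_m$ is a subcoalgebra, the map $\Sigma_m\to(\Sigma/R)^{\otimes(k+1)}$ factors through $(\Sigma_m/R)^{\otimes(k+1)}$, so $F_k\cap\Sigma_m = F_k(\Sigma_m)$. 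Therefore every $x\in\Sigma$ lies in some $\Sigma_m$, hence in some $F_k(\Sigma_m)\subset F_k$, proving the filtration on $\Sigma$ is exhaustive. The main obstacle, as flagged above, is the non-connectedness introduced by the order-$3$ grouplike $t_0$: one must resist the temptation to think of ``pointed'' as ``coradical is a sum of copies of the base field as an algebra'' and instead argue purely at the level of simple subcoalgebras, where the cyclic group $\Z/3$ behaves perfectly well. Once that conceptual point is settled, both claims reduce to the standard finite-dimensional facts applied stagewise.
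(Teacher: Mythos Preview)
Your exhaustiveness argument via the finite-dimensional subcoalgebras $\Sigma_m$ is essentially fine (and in fact the coradical filtration of any coalgebra is exhaustive once the coradical is identified, so this part is routine). The genuine gap is in the pointedness argument.

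You repeatedly conflate the algebra structure carried by $\Sigma_m$ itself with the algebra $(\Sigma_m)^*$ dual to its \emph{coalgebra} structure. These are different objects. The splitting $\F_2[t_0]/(t_0^3-1)\cong \F_2\times\F_4$ is a decomposition of $R$ as an \emph{algebra}; it says nothing about the dual of the coalgebra $R$. In fact the coalgebra dual of $R=\F_2[\Z/3]$ (with $\Delta g=g\otimes g$) is $\F_2\times\F_2\times\F_2$, one idempotent per grouplike, so no $\F_4$ ever appears in the relevant computation and your digression about ``$\F_4$ viewed as a coalgebra over $\F_2$'' is beside the point. Worse: if $(\Sigma_m)^*$ \emph{did} have an $\F_4$ factor, that factor would correspond to a $2$-dimensional simple subcoalgebra of $\Sigma_m$, so $\Sigma_m$ would \emph{not} be pointed---the opposite of what you want. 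Your correct observation that the simple subcoalgebras of $\F_2[G]$ are the lines $\F_2 g$ only handles $R$; it does not extend to $\Sigma_m$ without further input.

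The deeper issue is that you never use the coproduct on $t_i$ for $i\geq 1$. Finite-dimensionality alone cannot yield pointedness; one needs structural information about $\Delta$. (You also slip from $t_i^4=t_i$ to ``$t_i^{2^i}=t_i$'', and you assert that the algebra generated by $t_0,\dots,t_m$ is a sub\emph{coalgebra} without justification---the latter already requires the information about $\Delta t_i$ that you never invoke.)

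The paper takes a different route that avoids all of this: it produces a surjection of coalgebras
\[
\rho\colon \F_2[\Z/3][t_1,t_2,\ldots]\lra \Sigma
\]
from a coalgebra which is graded and connected over $\F_2[\Z/3]$, using the formula $\Delta(t_i)=1\otimes t_i+t_i\otimes 1+(\text{terms in }t_1,\dots,t_{i-1})$ inherited from $BP_*BP$. In a graded connected coalgebra every nonzero subcoalgebra meets degree zero, hence contains a grouplike; pulling back a simple $S\subset\Sigma$ to $\rho^{-1}S$ therefore forces $S$ to contain a grouplike and so to be one-dimensional. The same degree formula gives exhaustiveness directly: iterating $\Delta$ strictly lowers the maximal degree appearing in the tensor factors modulo the group ring.
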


\begin{proof}
The elements $1,t_0,t_0^2$ are grouplike. 
All classes $t_i$ with $ i\geq 1$ come from the graded Hopf algebroid $BP_*BP$. (More precisely, the generators $t_i$ in $\Sigma$ are obtained  from those of $BP_*BP$ by multiplication with a suitable power of the periodicity element to place them in degree zero.) A complete formula for the diagonal is given in \cite[A2.1.27]{MR0219077}, but from the grading it is already clear that 
\begin{eqnarray}\label{Delta}
\Delta (t_i) &=& 1\otimes t_i +t_i \otimes 1 + \text{ terms involving only $t_1,t_2,\ldots t_{i-1}$.}
\end{eqnarray}
Consider the surjection of coalgebras
$$ \rho\colon\F_2[\Z/3][t_1,t_2,\ldots ] \lra \Sigma$$
which sends the generator 1 of $\Z/3$ to $t_0$. Note that the source is in fact a connected graded coalgebra.
Suppose $S$ is a non trivial simple subcoalgebra of $\Sigma$. Then $\rho^{-1}S$ is a subcoalgebra of the polynomial algebra. From formula \ref{Delta} it is clear that it contains a grouplike element and so does $S$. Since $S$ is simple it has to coincide with the one-dimensional subcoalgebra generated by this element. It follows that $\Sigma$ is pointed.\par
To show exhaustiveness one argues similarly: formula \ref{Delta} implies that for each 
monomial in the $t_i$,  the maximal degree of a tensor factor, modulo elements in the 
group ring, decreases each time the diagonal $\Delta$  is applied. Hence the 
polynomial ring is coradically exhaustive and so is $\Sigma$.
\end{proof}

Recall from \cite[Section 6.5]{MR860042} the ring spectrum $T(2)$ which is part of a 
filtration of $BP$ and which was already mentioned in the introduction. 
Equation \eqref{T2} defines the ring spectrum $T_2$. 
Note that $T_2$ comes with a canonical map
$$\can\colon T_2 \lra (L_{K(2)}BP)[u^\pm]/(v_2-u^3)\lra E\, .$$
Set
\begin{align*}
M&= K_0 (T_2\wedge MString)\\
M^\CC &= K_0 (T_2\wedge MU\langle 6 \rangle).
\end{align*}
Then $M$ and $M^\CC$ are right $\Sigma$-comodule $\F_2[\Z/3]$-algebras. 
Moreover, we have the Witten orientation
$$\tau_W: MString \lra E$$
and the complex orientation induced by the standard coordinate on the elliptic curve $C$
$$ \tau_U: MU\langle 6\rangle \lra MU \lra E$$
(for details see e.g.~\cite{MR3448393}). The composite 
$$\tau^\CC_W : MU\langle 6\rangle\lra MString \stackrel{\tau_W}{\lra} E$$
gives another $MU\langle 6\rangle$-orientation on $E$. The difference class 
$$r_U= \tau^\CC_W /\tau_U \in E^0(BU\langle 6\rangle)$$
plays an important role in the theory of string characteristic classes (loc.\ cit.).
The orientations induce maps

\begin{alignat*}{2}
{\tau_W}_*&: &\quad &M =K_0 (T_2\wedge MString)
\xrightarrow{(\can\wedge\tau_W)_*} K_0 (E_2\wedge E_2)\stackrel{\mu_*}{\lra} K_0(E)=\Sigma\\
{\tau_U}_*&: &\quad &M^\CC =K_0 (T_2\wedge MU\langle 6\rangle)
\xrightarrow{(\can\wedge\tau_W^{\CC})_*} K_0 (E_2\wedge E_2)\stackrel{\mu_*}{\lra} K_0(E)=\Sigma
\end{alignat*}

\begin{lemma}\label{key}
There is a $\Sigma$-comodule algebra automorphism $\alpha$ of $M^\CC$ with the property that
$$\xymatrix{M^\CC \ar[r]^\alpha \ar[d]&M^\CC\ar[d]^{{\tau_U}_*}\\
M\ar[r]^{{\tau_W}_*}&\Sigma}$$
commutes.
\end{lemma}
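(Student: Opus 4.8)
The plan is to identify $M^\CC = K_0(T_2\wedge MU\langle 6\rangle)$ explicitly as a comodule algebra and then realize $\alpha$ as the effect of the difference class $r_U$. First I would recall that $K_0(T_2\wedge MU\langle 6\rangle)\cong \Sigma\otimes_{E_0}E_0(BU\langle 6\rangle)$ (or a suitably completed version), so that the two maps $\tau_{U*}$ and $\tau_{W*}$ differ precisely by the automorphism of $E_0(BU\langle 6\rangle)$ induced by multiplication with the invertible difference class $r_U = \tau_W^\CC/\tau_U \in E^0(BU\langle 6\rangle)$. The key input here is the Ando--Hopkins--Strickland description \cite{MR1869850} of $r_U$: it is the restriction of a characteristic class that is cubical, i.e.\ trivial on $BU\langle 6\rangle$ in a way compatible with the $H$-space structure coming from $\Omega SU(4)\to BU\langle 6\rangle$, which is exactly what forces the resulting self-map to be an algebra map and to respect the coalgebra structure dually.

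Second, I would define $\alpha$ concretely. On $E_0(BU\langle 6\rangle)$, ``multiplication by $r_U$'' is not literally a ring map, so instead one uses the Thom isomorphism: the two orientations $\tau_U$ and $\tau_W^\CC$ give two Thom classes for the bundle over $BU\langle 6\rangle$, and conjugating one Thom iso by the other is a self-equivalence of $M^\CC$ as a module, which becomes an algebra automorphism because $r_U$ is exponential/grouplike in the appropriate sense (again from Ando--Hopkins--Strickland). Smashing with $T_2$ and passing to $K_0$ gives the desired $\alpha$ on $M^\CC$; the fact that $r_U$ is a stable characteristic class (natural in the bundle) guarantees $\alpha$ is a map of $\Sigma$-comodules, since the $\Sigma$-coaction is induced by $\can\wedge\id$ and the $E$-cohomology operation is compatible with the coaction of $\Sigma = K_0E$ on $K_0(E\wedge-)$.

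Third, with $\alpha$ in hand the commutativity of the square reduces to an identity of orientations: chasing the definitions, ${\tau_U}_*\circ\alpha$ is computed by pairing the $\can$-image with the orientation $\tau_U$ \emph{twisted by} $r_U$, which is by construction $\tau_W^\CC$; restricting along $MU\langle 6\rangle\to MString$ then identifies this with ${\tau_W}_*$ on the nose. So the square commutes essentially by the defining equation $\tau_W^\CC = r_U\cdot\tau_U$ together with naturality of $\mu_*$.

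The main obstacle I expect is verifying that $\alpha$ is genuinely an \emph{algebra} automorphism and not merely a module automorphism --- equivalently, that $r_U$ is not just a unit but is ``exponential'' (its value on a sum of line bundles is the product of its values), which is precisely the content one must extract from \cite{MR1869850} about the Witten class / the sigma orientation on $BU\langle 6\rangle$. A secondary, more bookkeeping-style obstacle is dealing with completions: $E_0(BU\langle 6\rangle)$ and the smash with $T_2$ involve completed tensor products, so one must check that the isomorphism $M^\CC\cong \Sigma\,\widehat\otimes\,E_0(BU\langle 6\rangle)$ and the action of $r_U$ are well defined at the level of the $K(2)$-local homotopy, using the fact that $K_0 E = \Sigma$ is a nice (Landweber-exact, pro-free) coefficient ring so that $K_0(T_2\wedge-)$ behaves well. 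Once these two points are settled, the comodule and $\F_2[\Z/3]$-algebra compatibilities are formal, since $r_U$ restricted to $BU(3)\subset BU\langle 6\rangle$ and the cube structure interact trivially with the $t_0$-action.
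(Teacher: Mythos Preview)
Your overall strategy --- twist by the difference class $r_U=\tau_W^{\CC}/\tau_U$ via the Thom diagonal, and then read off the square from $\tau_W^{\CC}=r_U\cdot\tau_U$ --- is exactly the paper's strategy. But you have skipped the one step that is genuinely nontrivial, and your opening identification $M^\CC\cong \Sigma\otimes_{E_0}E_0(BU\langle 6\rangle)$ is not correct and obscures the issue.

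The point is that $r_U$ lives in $E^0BU\langle 6\rangle$, not in $(K\wedge T_2)^0BU\langle 6\rangle$. If you run the Thom-diagonal construction with $r_U$ as it stands, you obtain a map
\[
K\wedge T_2\wedge MU\langle 6\rangle \longrightarrow K\wedge E\wedge MU\langle 6\rangle,
\]
not a self-map of $K\wedge T_2\wedge MU\langle 6\rangle$. Since $K_0T_2$ is \emph{not} equal to $\Sigma=K_0E$ (the canonical map $\can$ is neither injective nor surjective on $K_0$), ``smashing with $T_2$ and passing to $K_0$'' does not by itself produce an automorphism of $M^\CC$. This is precisely where the paper does real work: it shows that the composite $\eta_R\circ r_U\colon BU\langle 6\rangle_+\to K\wedge E$ factors through $K\wedge T_2$. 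The argument uses Ando--Hopkins--Strickland not (primarily) for the exponential/multiplicative property you flag, but for the fact that a cubical structure is determined by its restriction to $P=(\CC P^\infty)^{\wedge 3}$; on $P$ the relevant power-series coefficients visibly lie in $\eta_R(\pi_*T_2)\subset\pi_*(K\wedge T_2)$, and the cocycle condition then extends this to a class $\hat r\in(K\wedge T_2)^0BU\langle 6\rangle$. With $\hat r$ in hand, the Thom-diagonal construction does give an endomorphism $\alpha$ of $M^\CC$, and the commutativity of the square and the $\Sigma$-comodule compatibility are then checked after pushing forward along the injective map to $K_0(E\wedge MU\langle 6\rangle)$, where they reduce to the identity $\tau_W=r_U\,\tau_U^{\CC}$ you cite.

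So: your ``main obstacle'' (the algebra-map property of $\alpha$) is a side issue; the actual obstacle is producing the lift $\hat r$ of $r_U$ to $(K\wedge T_2)$-cohomology, and that is where the cubical-structure description from \cite{MR1869850} is really used.
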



\begin{proof}
Ando-Hopkins-Strickland have shown in \cite{MR1869850} that a ring map from 
$BU\langle 6\rangle_+$ to a complex oriented ring spectrum coincides with a cubical structure 
on the associated formal group. In particular, such a map is determined by its restriction to
$$ P=\CC P^\infty \wedge \CC P^\infty  \wedge \CC P^\infty\,.$$  
The class $r_U \in E^0BU\langle 6 \rangle$ corresponds to such a cubical structure and 
hence satisfies the cocycle condition when restricted to $P$. We claim that the composite
$$r\colon BU\langle 6\rangle_+ \stackrel{r_U}{\lra } E \stackrel{\eta_R}{\lra} K\wedge E$$
already maps to $K\wedge T_2$. This is clear when it is restricted to $P$ because the 
coefficients of the power series already lie in $\eta_R(\pi_*E)$ and hence in the subring 
$$\eta_R(\pi_*T_2)\subset \pi_*(K\wedge T_2)$$ 
where $\eta_R$ denotes the right unit. Since the restriction map satisfies the cocycle condition 
it comes from a map $\hat{r}$ in $(K\wedge T_2)^0BU\langle 6 \rangle $. 

Consider the commutative diagram 
$$
\xymatrix{K\wedge T_2\wedge MU\langle 6\rangle \ar[d]^{1\wedge \Delta}\ar[r]&
K\wedge E\wedge MU\langle 6\rangle \ar[d]^{1\wedge \Delta}\\
K\wedge T_2\wedge BU\langle 6\rangle _+\wedge MU\langle 6\rangle \ar[d]^{1\wedge \hat{r}\wedge 1}\ar[r]&
K\wedge E\wedge BU\langle 6\rangle _+\wedge MU\langle 6\rangle \ar[d]^{1\wedge {r}\wedge 1}\\
K\wedge T_2\wedge K\wedge T_2\wedge MU\langle 6\rangle \ar[d]^{\mu\wedge 1}\ar[r]&
K\wedge E\wedge K\wedge E\wedge MU\langle 6\rangle \ar[d]^{\mu\wedge 1}\\
K\wedge T_2 \wedge MU\langle 6\rangle\ar[r] &
K\wedge E\wedge MU\langle 6\rangle }
$$
in which $\Delta$ is the Thom diagonal. The automorphism $\alpha$ is induced by the 
composite of the vertical maps on the left.  Since the bottom horizontal map is injective in homotopy 
it suffices to prove commutativity of the claimed square for the map induced by $r$ instead of 
$\hat{r}$. This in turn follows immediately from the equality
$$ \tau_W = r_U \tau^\CC_U\,.$$
It remains to show that $\alpha$ is a $\Sigma$-comodule automorphism which again is 
obvious for the the map induced by $r$ instead of $\hat{r}$.
\end{proof}

\begin{prop}\label{Hopfsplitting}
The maps ${\tau_U}_*$ and ${\tau_W}_*$ are $\star$-surjective. In particular, 
there are isomorphisms of $\Sigma$-comodules
\begin{align*}
h^\CC :\:  &M^\CC \lra P_1(M^\CC) \otimes \Sigma  \\
h: \: &M  \lra P_1(M) \otimes \Sigma 
\end{align*}
\end{prop}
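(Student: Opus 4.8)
The plan is to prove $\star$-surjectivity of ${\tau_U}_*$ first, and then transport the conclusion to ${\tau_W}_*$ using Lemma \ref{key}. Once $\star$-surjectivity is known, the isomorphisms $h^\CC$ and $h$ are immediate: $M^\CC$ is a $\Sigma$-comodule $\F_2[\Z/3]$-algebra with $\Z/3$ grouplike in $M^\CC$ (the images of $1,t_0,t_0^2$), and ${\tau_U}_*\colon M^\CC\to\Sigma$ is a $\Sigma$-comodule and $\F_2[\Z/3]$-algebra map, so Theorem \ref{MM} applies verbatim to produce $h^\CC\colon M^\CC\xrightarrow{\sim}P_1(M^\CC)\otimes\Sigma$; likewise for $M$ and ${\tau_W}_*$.

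To see that ${\tau_U}_*$ is $\star$-surjective I would argue as follows. Surjectivity is the standard fact that a complex orientation $T_2\wedge MU\langle 6\rangle\to E\wedge E$ realizes $K_0(E\wedge E)=\Sigma$ as a quotient comodule algebra of $M^\CC$; concretely, $K_*(MU\langle 6\rangle)$ surjects onto $E_*^\vee E$ after smashing with $T_2$ and reducing mod $\mathfrak m$, because the complex orientation $\tau_U$ hits the universal coordinate and hence all the $t_i$. For the primitive part, recall from the earlier discussion that $P_1(M^\CC)\to F_0(M^\CC)$ is an isomorphism and that $\bar P_1Gr_k(M^\CC)$ detects classes whose coaction is $1\otimes\sigma$ modulo lower filtration for some $\sigma\in F_k$. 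The graded left primitives of $\Sigma$ itself are generated, in filtration $k$, by the polynomial generators $t_k$ (this is exactly what formula \eqref{Delta} says: $\Delta(t_k)=1\otimes t_k+t_k\otimes 1+\text{lower}$, so $t_k$ maps to a graded left primitive in filtration $k$, and these together with the group ring exhaust $F_k/F_{k-1}$). So I must produce, for each $k$, a class in $\bar P_1Gr_k(M^\CC)$ mapping to $t_k$. These are supplied by the polynomial generators of $K_*(MU\langle 6\rangle)$ coming from $BP_*BP$ via the complex orientation: the Ravenel generators $t_i$ of $\pi_*(E\wedge T(2))\cong E_*[t_1,t_2]$, together with their continuation in $K_*MU$, lift to classes in $M^\CC$ whose coaction is $1\otimes t_k$ modulo lower filtration, because the Thom diagonal computes the coaction from the coproduct in $BP_*BP$. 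Hence ${\tau_U}_*$ carries $\bar P_1Gr_k(M^\CC)$ onto the graded left primitives of $\Sigma$ in each filtration, which is precisely $\star$-surjectivity.

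For ${\tau_W}_*$, Lemma \ref{key} gives a $\Sigma$-comodule algebra automorphism $\alpha$ of $M^\CC$ fitting into a commuting square with ${\tau_U}_*$, ${\tau_W}_*$ and the map $M^\CC\to M$ induced by $MU\langle 6\rangle\to MString$. Since $\alpha$ is a comodule automorphism it preserves the coradical filtration and restricts to an isomorphism on each $\bar P_1Gr_k$; and $M^\CC\to M$ is itself a surjection of $\Sigma$-comodule algebras (the mod~$2$ cohomology of $MString$ contains $\cat A/\!/\cat A(2)$ as a summand, with the complementary summand accounting for the extra generators, so on $K_0$ the map $M^\CC\to M$ is onto — or more directly, $MU\langle6\rangle\to MString$ is a rational equivalence onto a summand and the relevant $K(2)$-homology statement follows). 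Therefore ${\tau_W}_*={\tau_U}_*\circ\alpha^{-1}$ composed with (a section of) $M^\CC\to M$ is $\star$-surjective as soon as ${\tau_U}_*$ is. Applying Theorem \ref{MM} once more yields $h$.

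The main obstacle is the second paragraph: establishing that the complex orientation actually lifts the graded left primitives $t_k\in\Sigma$ to classes in $M^\CC$ with the correct filtration behaviour — i.e.\ controlling the coaction $\psi$ on $M^\CC=K_0(T_2\wedge MU\langle6\rangle)$ finely enough to see the ``$1\otimes t_k$ modulo lower filtration'' pattern. This requires unwinding the Thom diagonal and the identification $E_*^\vee E\cong E_0\otimes_V VT[t_0^{\pm}]\otimes_V E_0$ recalled earlier, and matching the coradical filtration on $\Sigma$ with the polynomial-degree filtration coming from $BP_*BP$; everything else is either a formal consequence of Theorem \ref{MM} or a transport of structure along $\alpha$.
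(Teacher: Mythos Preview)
Your overall architecture matches the paper's: show ${\tau_U}_*$ is $\star$-surjective, transport via Lemma~\ref{key}, then invoke Theorem~\ref{MM}. The transport step is fine, and in fact simpler than you make it: from the square ${\tau_W}_*\circ(M^\CC\to M)={\tau_U}_*\circ\alpha$, once the right-hand side is $\star$-surjective so is ${\tau_W}_*$, because any comodule map $M^\CC\to M$ preserves the coradical filtration and hence carries $\bar P_1Gr_k(M^\CC)$ into $\bar P_1Gr_k(M)$. You do not need $M^\CC\to M$ to be surjective, nor a section of it, and your stated justification for surjectivity (``rational equivalence onto a summand'') is not correct.

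The real gap is your second paragraph, exactly where you flag it. You attempt to verify $\star$-surjectivity of ${\tau_U}_*$ by hand, lifting the $t_k$ one at a time and arguing their coaction has the right shape. But $\star$-surjectivity asks for surjectivity on the entire subspace $\bar P_1Gr_k(\Sigma)$, not just on a generating set, and you have not identified that subspace; the sentence ``these together with the group ring exhaust $F_k/F_{k-1}$'' is neither proved nor sufficient. Unwinding the Thom diagonal on $K_0(T_2\wedge MU\langle6\rangle)$ finely enough to control the coradical filtration is a substantial computation you have not carried out.

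The paper avoids this computation entirely. It invokes the Hovey--Ravenel result that $DA(1)\wedge MU\langle6\rangle$ splits $2$-locally as a wedge of suspensions of $BP$, compatibly with the canonical map to $BP$. After $K(2)$-localizing (so that each $BP$ splits further into $E(2)$'s) and passing along $DA(1)\to T(2)\to T_2$ and $E(2)\to E$, this yields a \emph{spectrum-level} section $s\colon E\to T_2\wedge MU\langle6\rangle$ of the canonical map. The induced $s_*\colon\Sigma\to M^\CC$ is then automatically a $\Sigma$-comodule section of ${\tau_U}_*$, and a comodule section gives $\star$-surjectivity for free: it is surjective, and $s_*$ carries $\bar P_1Gr_k(\Sigma)$ into $\bar P_1Gr_k(M^\CC)$, so every graded left primitive of $\Sigma$ is hit. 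This is the missing idea.
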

\begin{proof}
As already mentioned in the inroduction, Ravenel and Hovey show in 
\cite[Corollary 2.2(2)]{MR1297530} that at $p=2$ the spectrum  
$DA(1)\wedge MU\langle 6 \rangle$ is a wedge of suspensions of $BP$. A closer inspection 
of the proof reveals that the splitting isomorphism can be chosen to make the diagram
$$\xymatrix { DA(1)\wedge MU\langle 6 \rangle \ar[r]^\simeq \ar[d]& \bigvee_{i\in I} \Sigma^{n_i}BP\ar[d]\\ 
MU \wedge BP\ar[r] & BP}$$
commutative. Here the right vertical map is the projection onto the summand which contains 
the unit in homotopy. Since  $K(2)$-locally each $BP$-summand splits further into sums of 
Johnson-Wilson spectra $E(2)$ it follows that there is a section of the composite
$$ g: DA(1)\wedge MU\langle 6 \rangle \lra BP \lra E(2)\,.$$
Let $j$ be the map from $E(2)$ to $E$. Then  $jg$ factors over $T_2 \wedge MU\langle 6 \rangle$  
and we obtain a section $s$ of the canonical map 
$$ T_2 \wedge MU\langle 6 \rangle \lra E\,.$$
Now it is clear that ${\tau_U}_*$ is $\star$-surjective. Moreover, Lemma \ref{key} implies 
the same is true for the map  ${\tau_W}_*$. Hence, the second claim is a corollary of 
Theorem~\ref{MM}.
\end{proof}

\begin{prop}\label{cofree}
The module $K_*(T_2\wedge MString) $ is concentrated in even dimensions.
\end{prop}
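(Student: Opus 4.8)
The plan is to deduce that $K_*(T_2\wedge MString)$ is concentrated in even degrees from the corresponding statement for $MU\langle 6\rangle$, using Lemma~\ref{key} together with the Thom isomorphism. First I would observe that since $u\in\pi_2 T_2$ is a unit, the two objects $K_*(T_2\wedge MString)$ and $K_0(T_2\wedge MString)=M$ determine each other: the full graded module is just $M[u^{\pm}]$ up to a degree shift, so it suffices to show that $K_1(T_2\wedge MString)$ vanishes, or equivalently that $M$ already accounts for everything. Concretely the claim is equivalent to the assertion that $K_*(T_2\wedge MString)$ is a free $K_*$-module on even-degree generators, which in turn — via the Thom isomorphism $K_*(T_2\wedge MString)\cong K_*(T_2)\otimes_{K_*} K_*(BString)$ (orientability of $K=E/\mathfrak m$ for string bundles, which holds because the Witten orientation $\tau_W$ makes $MString$ into a $K$-oriented, indeed $E$-oriented, spectrum) — reduces to the statement that $K_*(BString)$ is concentrated in even degrees.

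Now $BString$ is the $7$-connected cover of $BO$, and rationally and after the relevant localization its cohomology is polynomial on even generators; but to keep the argument self-contained I would instead run the comparison through $BU\langle 6\rangle$. The map $BU\langle 6\rangle\to BString$ (coming from the forgetful map $BU\to BSO\to BString$ up to connected covers, or more precisely the map realizing $MU\langle 6\rangle\to MString$ already used in the construction of $\tau^\CC_W$) induces, after smashing with $T_2$ and applying $K_*$, a map $M^\CC\to M$ of $\Sigma$-comodule algebras. By Proposition~\ref{Hopfsplitting} both $M$ and $M^\CC$ split as $P_1(-)\otimes\Sigma$ as $\Sigma$-comodules, and $\Sigma=K_0 E$ is concentrated in degree $0$; hence $K_*(T_2\wedge MString)$ is concentrated in even degrees as soon as $P_1(M)$ is, and likewise for $M^\CC$. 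So the whole proposition comes down to showing $P_1(M)$ lives in even internal degrees. For $M^\CC$ this is classical: $K_*(T_2\wedge MU\langle 6\rangle)$ is evenly graded because $T_2$ is even periodic and $K_*BU\langle 6\rangle$ is even (it is a sub-coalgebra, in fact the primitively-generated part, sitting inside the evenly graded $K_*BU$), so $P_1(M^\CC)$ is even. The automorphism $\alpha$ of Lemma~\ref{key} is an automorphism of $\Sigma$-comodule algebras, hence degree-preserving; but this only transports the structure within $M^\CC$, so the transition from $M^\CC$ to $M$ needs a genuine input.

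The key step, and the one I expect to be the main obstacle, is producing that input: an identification of $P_1(M)$ with $P_1(M^\CC)$, or at least a demonstration that $M$ and $M^\CC$ are abstractly isomorphic as graded $\Sigma$-comodules. Here is how I would try to get it. The section $s$ of $T_2\wedge MU\langle 6\rangle\to E$ constructed in the proof of Proposition~\ref{Hopfsplitting}, together with the factorization $jg$ through $T_2\wedge MU\langle 6\rangle$, exhibits a $K$-homology splitting; combined with Lemma~\ref{key}, the orientation $\tau_W$ gives a section of $T_2\wedge MString\to E$ as well, by precomposing $s$ with (a lift of) $\alpha$ and the map $MU\langle 6\rangle\to MString$. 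Applying $K_0(-)$ to this section and to $\tau_{W*}$ shows $\Sigma$ splits off $M$ as a $\Sigma$-comodule summand; feeding this into Theorem~\ref{MM} via Proposition~\ref{Hopfsplitting} gives $M\cong P_1(M)\otimes\Sigma$. It then remains to match $P_1(M)$ with $P_1(M^\CC)$. For this I would compare the two mod~$2$ homologies directly: $H_*(MString;\F_2)$ contains $\mathcal A/\!/\mathcal A(2)$ as a summand and the map from $H_*(MU\langle 6\rangle;\F_2)$ is understood from \cite{MR1297530}; the upshot, already implicit in the Hovey–Ravenel analysis, is that after smashing with $T_2$ (which kills the relevant higher $\mathcal A$-module structure down to $\Sigma$) the comparison map $M^\CC\to M$ is an isomorphism of $\Sigma$-comodules onto a direct summand, and a dimension count in each internal degree — using that $K_*(T_2)$, $K_*(MU\langle 6\rangle)$, and the known cell structure of $MString$ through the relevant range all have even Poincaré series — forces it to be all of $M$. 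Since $M^\CC$ is evenly graded, so is $M$, and hence so is $K_*(T_2\wedge MString)=M[u^{\pm}]$. The delicate point throughout is controlling the internal grading: $\alpha$ preserves it for free, but the passage $M^\CC\to M$ does not come with an a priori grading isomorphism, so the argument genuinely needs the $\F_2$-homology comparison (or an explicit computation of $K_*BString$) rather than only the formal Hopf-algebra machinery of Section~2.
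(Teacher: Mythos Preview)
Your reduction via the Thom isomorphism and K\"unneth to the statement that $K_*BString$ is concentrated in even degrees is correct and is exactly what the paper does. The paper then simply cites this fact from the literature (Kitchloo--Laures--Wilson \cite{MR2093483}, with the adaptation to this version of $K$ noted in \cite{MR3471093}); that is the entire proof.

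Your attempt to make this self-contained via Proposition~\ref{Hopfsplitting} and Lemma~\ref{key} has a genuine gap. The objects $M$ and $M^\CC$ are \emph{defined} as $K_0(T_2\wedge MString)$ and $K_0(T_2\wedge MU\langle 6\rangle)$; the splitting $M\cong P_1(M)\otimes\Sigma$ of Theorem~\ref{MM} is an isomorphism of ungraded $\F_2$-vector spaces (with $\Sigma$-comodule structure) sitting entirely in degree~$0$. There is no ``internal grading'' on $P_1(M)$ that could witness or rule out classes in $K_1$: the Milnor--Moore machinery of Section~2 is blind to the odd part, and indeed on $K_1$ the map ${\tau_W}_*$ is zero (since $K_1E=0$), so no $\star$-surjectivity argument is available there. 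Thus the step ``$K_*(T_2\wedge MString)$ is even as soon as $P_1(M)$ is'' is a non sequitur. Your fallback, a dimension count forcing $M^\CC\to M$ to be an isomorphism, is also unjustified: $BU\langle 6\rangle\to BString$ is not a $K_*$-equivalence, and no Poincar\'e-series argument of the kind you sketch is available without already knowing $K_*BString$. The evenness of $K_*BString$ is an independent input, not a formal consequence of the comodule algebra structure, which is why the paper imports it from \cite{MR2093483}.
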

\begin{proof} It is clear that $K_* T_2$ is concentrated in even dimensions. 
For the module $$K_* MString\cong K_* BString,$$ this is the main result of \cite{MR2093483} (see also \cite[Remark 3.1, Theorem 1.3]{MR3471093} for this version of Morava $K$-Theory). The claim follows from the K\"unneth isomorphism.
\end{proof}

\begin{prop}\label{last}
Let $X$ be a $K(n)$-local spectrum whose Morava K-homology is concentrated in even degrees. Then $X$ is a wedge of copies of $E$ if and only if $K_0(X)$ is cofree as a comodule over $\Sigma=K_0(E)$. 
\end{prop}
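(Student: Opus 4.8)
The statement is an "if and only if" about when a $K(n)$-local spectrum $X$ with even Morava $K$-homology splits as a wedge of copies of $E$. The forward direction is essentially formal: if $X \simeq \bigvee_\alpha \Sigma^{2n_\alpha} E$ (only even suspensions occur, by the evenness hypothesis), then $K_0(X) \cong \bigoplus_\alpha K_0(\Sigma^{2n_\alpha}E) = \bigoplus_\alpha K_0(E) = \bigoplus_\alpha \Sigma$, which is manifestly a cofree (indeed free, but here $\Sigma$ is a Hopf algebra so cofree $=$ free-as-cofree) $\Sigma$-comodule; one must check that the $K(n)$-local wedge still computes $K_0$ additively, which holds because $K_*(-)$ commutes with arbitrary coproducts in the $K(n)$-local category (each summand is $K(n)$-locally dualizable, or one invokes that $K_*$ of a $K(n)$-local wedge is the completed direct sum, which for evenly-graded summands has no $\lim^1$ obstruction).

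**The substance is the converse.** Suppose $K_0(X)$ is cofree over $\Sigma$, say $K_0(X) \cong V \otimes_{\F_2} \Sigma$ for a graded $\F_2$-vector space $V$ concentrated in even degrees (this is what "cofree" gives, a cobasis $V = P_1(K_*X)$ by the primitive-space description appearing in the earlier cotensor discussion, $P(M) \cong M\,\Box_\Sigma R$). Choose a homogeneous basis $\{x_\alpha\}$ of $V$, with $x_\alpha$ in degree $2n_\alpha$. Each $x_\alpha$ is a primitive element of $K_0(X) = \pi_0(K\wedge X)$, hence lifts: since $E$ is $K(n)$-locally the object corepresenting primitives — concretely, $[\Sigma^{2n_\alpha}E, X]_{K(n)} \to P_1(K_*X)_{2n_\alpha}$ is an isomorphism because $E_\bullet^\vee E \cong C(\Gamma, E_*)$ makes $E$-module maps out of $E$ into $K\wedge X$ detect exactly the $\Gamma$-invariant ($=$ primitive) part — we get maps $f_\alpha\colon \Sigma^{2n_\alpha} E \to X$. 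Assemble $f = \bigvee_\alpha f_\alpha \colon \bigvee_\alpha \Sigma^{2n_\alpha} E \to X$ (the $K(n)$-local wedge).

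**Checking $f$ is an equivalence.** It suffices to show $f$ induces an isomorphism on $K_* = \pi_*(K\wedge -)$, since both source and target are $K(n)$-local and $K$-homology equivalences between $K(n)$-local spectra are equivalences. On $K_0$, the map $f_*$ sends the chosen cobasis element in the $\alpha$-th summand to $x_\alpha \otimes 1$, and by $\Sigma$-comodule linearity (each $f_\alpha$ is a map of spectra, so $f_*$ commutes with the $\Sigma$-coaction coming from $K\wedge K \to K$) the induced map $\bigoplus_\alpha \Sigma \to K_0(X)$ is exactly the cofree-comodule identification $V \otimes \Sigma \xrightarrow{\sim} K_0(X)$; evenness handles all other degrees since both sides vanish in odd degrees. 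The one genuine technical point — and the main obstacle — is the interchange of the infinite $K(n)$-local wedge with $K_*$: one needs $K_*(\bigvee^{K(n)}_\alpha \Sigma^{2n_\alpha}E) \cong \bigoplus_\alpha \Sigma$ rather than some completion. Because $\Sigma = K_0 E$ is a fixed finite-type object and all shifts $2n_\alpha$ are even, the relevant Milnor sequence $\lim^1$ term over finite sub-wedges vanishes (the homology groups are finite in each degree, so the tower is Mittag–Leffler), so $K_*$ of the localized wedge is the honest direct sum and agrees with $K_*$ of the naive wedge. With that in hand $f$ is a $K_*$-isomorphism, hence a $K(n)$-local equivalence, and $X \simeq \bigvee_\alpha \Sigma^{2n_\alpha}E$ as desired. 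I would organize the write-up as: (1) reduce "equivalence" to "$K_*$-iso" via $K(n)$-local Whitehead; (2) construct the $f_\alpha$ from primitives using $E_*^\vee E \cong C(\Gamma,E_*)$; (3) identify $f_*$ on $K_0$ with the cofreeness isomorphism; (4) dispatch the $\lim^1$/wedge interchange.
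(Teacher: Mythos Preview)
Your overall strategy---reduce to a $K_*$-isomorphism and realize the cofreeness isomorphism by an actual map of spectra---is the right shape, but the construction of the maps $f_\alpha\colon \Sigma^{2n_\alpha}E \to X$ is a genuine gap. You claim that $[\Sigma^{2n_\alpha}E,X]_{K(n)} \to P_1(K_*X)_{2n_\alpha}$ is an isomorphism, and justify this by saying that ``$E$-module maps out of $E$ into $K\wedge X$ detect exactly the $\Gamma$-invariant part.'' But $E$-module maps $E\to K\wedge X$ compute $\pi_0(K\wedge X)=K_0X$, \emph{all} of it, not the primitives; and in any case these are not the same as spectrum maps $E\to X$. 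The spectrum $E$ is not small in the $K(n)$-local category, and there is no off-the-shelf identification of $[E,X]$ with primitives in $K_0X$ for a general $K(n)$-local $X$. Without such an identification you cannot produce the $f_\alpha$.

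The paper sidesteps this by building the comparison map in the \emph{other} direction. One first invokes Hovey--Strickland (Proposition~8.4) to deduce from the evenness of $K_*X$ that $E_*^\vee X$ is pro-free with $K_*X \cong E_*^\vee X/\mathfrak m$; this lets one lift the $\Sigma$-comodule isomorphism $\bar\alpha\colon K_0X\to K_0F$ (with $F$ the target wedge of copies of $E$) to a map $\alpha\colon E_0^\vee X\to E_0^\vee F$. Then, because $F$ is an $E$-module, the universal coefficient theorem $F^*(X)\cong \Hom_{E_*}(E_*^\vee X,F_*)$ realizes $\alpha$ by an honest map $f\colon X\to F$. Finally one checks, summand by summand via the augmentation to $K_*$, that $f_*=\bar\alpha$ on $K$-homology. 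The point is that maps \emph{into} an $E$-module are computable by UCT once the source has pro-free $E_*^\vee$; maps \emph{out of} $E$ into an arbitrary $X$ are not. If you rework your argument with the map going $X\to F$ and insert the Hovey--Strickland input, it goes through; as written, the lifting step does not.
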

\begin{proof} Obviously, the cofreeness as a comodule is a necessary condition for a splitting. 
In order to show the converse, we first observe with \cite[Proposition 8.4(e)(f)] {MR1601906} that the Morava $K$-homology of $X$ being even implies 
the profreenes of $E_*^\vee(X)$ and 
$$K_*(X)\cong E_*^\vee(X) /\mathfrak m .$$
Choose a wedge $F$ of copies of $E$ and an isomorphism of $\Sigma$-comodules
$\bar{\alpha}$ from $K_0(X)$ to $K_0(F)$. Also, choose a 
lift of $\bar{\alpha}$  in the diagram 
$$\xymatrix{E_0^\vee (X)\ar@{-->}[r]^\alpha \ar[d]^{\text{mod } \mathfrak m} &E_0^\vee (F)\ar[d]^{\text{mod }\mathfrak m} \\
K_0 (X) \ar[r]^{\bar{\alpha}} & K_0 (F)}\,.$$
The universal coefficient theorem for $E$-module spectra yields an isomorphism 
$$\alpha^*\colon  F^*(F) \cong \Hom_{E_*}(E^\vee_*F,F_*)\stackrel{\cong}{\lra}\Hom_{E_*}(E^\vee_*(X),F_*)\cong F^*(X).$$
Let $f\colon X\ra F$ be the image of 1 under this map. In other words, $f$ corresponds to the
composite
$$ E^\vee_*(X) \stackrel{\alpha_*}{\lra} E_*^\vee (F) \stackrel{\mu_*}{\lra}F_*$$
where $\mu$ is the multiplication.
We claim that $f$ induces the map $\bar{\alpha}$ in $K$-homology and hence is a $K$-local isomorphism. Let $p\colon F\ra E$ be the projection onto a summand. It suffices to show the equality
$$ p_*f_*=p_*\bar{\alpha}: K_0X\lra K_0E$$
of $\Sigma$-comodule maps or, dually, the equality of $K^0(E)$-module maps from $K^0E$ to $K^0X$. By construction, the maps $p_*f_*$ and $p_*\bar{\alpha}$ coincide when composed with the augmentation $\mu_*$ to $K_*$. Hence, the dual maps coincide 
on the generator $1\in K^0(E)$ and the result follows.
\end{proof}

\begin{proof}[ of Theorem \ref{main}] 
The theorem is a consequence of the Propositions \ref{Hopfsplitting}, \ref{cofree} and \ref{last}.
\end{proof}

\bibliographystyle{amsalpha}
\bibliography{toda}
\end{document}